\newtheorem{theorem}{Theorem}[section]
\newtheorem{proposition}[theorem]{Proposition}
\theoremstyle{definition}
\newtheorem{definition}[theorem]{Definition}
\theoremstyle{remark}
\newtheorem{remark}[theorem]{Remark}
\newcommand{\be}{\begin{equation}}
\newcommand{\ee}{\end{equation}}
\newcommand{\dz}{\wedge}
\newcommand{\ba}{\begin{array}}
\newcommand{\ea}{\end{array}}
\newcommand{\beq}{\begin{eqnarray}}
\newcommand{\eeq}{\end{eqnarray}}
\newtheorem{lm}{lemma}
\newtheorem{thee}{theorem}
\newtheorem{proo}{proposition}
\newtheorem{co}{corollary}
\newtheorem{rem}{remark}
\newtheorem{deff}{definition}
\newcommand{\bd}{\begin{deff}}
\newcommand{\ed}{\end{deff}}
\newcommand{\bl}{\begin{lm}}
\newcommand{\el}{\end{lm}}
\newcommand{\bp}{\begin{proo}}
\newcommand{\ep}{\end{proo}}
\newcommand{\bt}{\begin{thee}}
\newcommand{\et}{\end{thee}}
\newcommand{\bc}{\begin{co}}
\newcommand{\ec}{\end{co}}
\newcommand{\brm}{\begin{rem}}
\newcommand{\erm}{\end{rem}}
\newcommand{\der}{{\rm d}}
\def\frak{\mathfrak}
\newcommand{\newc}{\newcommand}
\renewcommand{\exp}{\operatorname{exp}}
\newcommand{\id}{\operatorname{id}}
\let\ccdot\cdot
\def\cdot{\hbox to 2.5pt{\hss$\ccdot$\hss}}
\newc{\aR}{\mbox{\boldmath{$ R$}}}
\newc{\aS}{\mbox{\boldmath{$ S$}}}
\newc{\aT}{\mbox{\boldmath{$ T$}}}
\newc{\aW}{\mbox{\boldmath{$ W$}}}
\newc{\aK}{\mbox{\boldmath{$ K$}}}
\newc{\aL}{\mbox{\boldmath{$ L$}}}
\newcommand{\bbC}{\mathbb{C}}
\newcommand{\hook}{\raisebox{-0.35ex}{\makebox[0.6em][r]
{\scriptsize $-$}}\hspace{-0.15em}\raisebox{0.25ex}{\makebox[0.4em][l]{\tiny
 $|$}}}
\newcommand{\bma}{\begin{pmatrix}}
\newcommand{\ema}{\end{pmatrix}}
\newc{\obstrn}[2]{B^{#1}_{#2}}
\newcommand{\rpl}                         
{\mbox{$
\begin{picture}(12.7,8)(-.5,-1)
\put(0,0.2){$+$}
\put(4.2,2.8){\oval(8,8)[r]}
\end{picture}$}}
\newcommand{\lpl}                         
{\mbox{$
\begin{picture}(12.7,8)(-.5,-1)
\put(2,0.2){$+$}
\put(6.2,2.8){\oval(8,8)[l]}
\end{picture}$}}
\newc{\tensor}[1]{#1}
\newc{\Mvariable}[1]{\mbox{#1}}
\newc{\down}[1]{{}_{#1}}
\newc{\up}[1]{{}^{#1}}
\newc{\JulyStrut}{\rule{0mm}{6mm}}
\newc{\midtenPan}{\mbox{\sf S}}
\newc{\midten}{\mbox{\sf T}}
\newc{\midtenEi}{\mbox{\sf U}}
\newc{\ATen}{\mbox{\sf E}}
\newc{\BTen}{\mbox{\sf F}}
\newc{\CTen}{\mbox{\sf G}}
\def\sideremark#1{\ifvmode\leavevmode\fi\vadjust{\vbox to0pt{\vss
 \hbox to 0pt{\hskip\hsize\hskip1em
 \vbox{\hsize3cm\tiny\raggedright\pretolerance10000
 \noindent #1\hfill}\hss}\vbox to8pt{\vfil}\vss}}}%
\newcommand{\bgs}{{\textstyle \bigodot}}
\newcommand{\bgt}{{\textstyle \bigotimes}}
\newcommand{\Span}{\mathrm{Span}}
\numberwithin{equation}{section}
\newcounter{romenumi}
\newcommand{\labelromenumi}{(\roman{romenumi})}
\begin{document}
\title{Aerobatics of flying saucers}
\vskip 1.truecm
\author{Michael Eastwood} \address{School of Mathematical Sciences,
University of Adelaide, SA 5005, Australia}
\email{meastwoo@member.ams.org}
\author{Pawe\l~ Nurowski} \address{Center for Theoretical Physics ,
PAS, Al. Lotnik\'ow 32/46, 02-668 Warszawa, Poland}
\email{nurowski@cft.edu.pl}
\thanks{This work was supported by the Simons Foundation grant 346300 and the
Polish Government MNiSW 2015--2019 matching fund. It was written whilst
the first author was visiting the Banach Centre at IMPAN in Warsaw for
the Simons Semester `Symmetry and Geometric Structures' and during
another visit to Warsaw supported by the Polish National Science Centre
(NCN) via the POLONEZ grant 2016/23/P/ST1/04148, which received funding from
the European Union's Horizon 2020 research and innovation programme under the
Marie Sk{\l}odowska-Curie grant agreement No.~665778.}

\begin{abstract}
Starting from the observation that a flying saucer is a nonholonomic mechanical system whose
$5$-dimensional configuration space is a contact manifold, we show how to enrich this space
with a number of geometric structures by imposing further nonlinear restrictions on 
the saucer's velocity. These restrictions define certain `man{\oe}uvres' of the saucer, which 
we call `attacking,' `landing,' or `$G_2$~mode' man{\oe}uvres,
and which equip its configuration space with three kinds of flat parabolic geometry in 
five dimensions. The attacking man{\oe}uvre corresponds to the flat Legendrean 
contact structure, the landing man{\oe}uvre corresponds to the flat hypersurface type 
CR structure with Levi form of signature $(1,1)$, and the most complicated $G_2$ 
man{\oe}uvre corresponds to the contact Engel structure \cite{engel,katja} with split 
real form of the exceptional Lie group $G_2$ as its symmetries.
A celebrated double fibration relating the two nonequivalent flat $5$-dimensional
parabolic $G_2$ geometries is used to construct a `$G_2$~joystick,' consisting of 
two balls of radii in ratio $1\!:\!3$ that transforms the difficult 
$G_2$ man{\oe}uvre into the pilot's action of rolling one of joystick's balls on the other 
without slipping nor twisting.      
\end{abstract}
\maketitle
\vspace{-1truecm}
\tableofcontents
\newcommand{\bbS}{\mathbb{S}}
\newcommand{\bbR}{\mathbb{R}}
\newcommand{\sog}{\mathbf{SO}}
\newcommand{\glg}{\mathbf{GL}}
\newcommand{\slg}{\mathbf{SL}}
\newcommand{\og}{\mathbf{O}}
\newcommand{\soa}{\frak{so}}
\newcommand{\gla}{\frak{gl}}
\newcommand{\sla}{\frak{sl}}
\newcommand{\sua}{\frak{su}}
\newcommand{\dr}{\mathrm{d}}
\newcommand{\sug}{\mathbf{SU}}
\newcommand{\cspg}{\mathbf{CSp}}
\newcommand{\gat}{\tilde{\gamma}}
\newcommand{\Gat}{\tilde{\Gamma}}
\newcommand{\thet}{\tilde{\theta}}
\newcommand{\Thet}{\tilde{T}}
\newcommand{\rt}{\tilde{r}}
\newcommand{\st}{\sqrt{3}}
\newcommand{\kat}{\tilde{\kappa}}
\newcommand{\kz}{{K^{{~}^{\hskip-3.1mm\circ}}}}
\newcommand{\bv}{{\bf v}}
\newcommand{\di}{{\rm div}}
\newcommand{\curl}{{\rm curl}}
\newcommand{\cs}{(M,{\rm T}^{1,0})}
\newcommand{\tn}{{\mathcal N}}
\newcommand{\ten}{{\Upsilon}}
\section{Introduction}\label{intr}
\subsection{What is a flying saucer?}
Let us start with the Wikipedia definition \cite{wiki}: \\\\
\emph{
A flying saucer (also referred to as a flying disc) is a type of described flying craft with a disc or saucer-shaped body, commonly used generically to refer to any anomalous flying object.}\\\\ 
In this short note we show that \emph{a flying saucer}, i.e.~a disc-shaped body that can move in
$3$-space according to natural rules, is an example of an interesting
 \emph{nonholonomic system}. In particular, \emph{we define certain classes of aerobatics of flying saucers}, which equip their configuration spaces with structures of a number of \emph{flat parabolic geometries in dimension five}. This will include flat \emph{Legendrean contact geometry} which has $\sla(4,\bbR)$ as its Lie algebra of symmetries, as well as a flat ${\bf G}_2$  \emph{contact} \emph{geometry} with symmetry algebra isomorphic to the split real form of the exceptional Lie algebra $\mathfrak{g}_2$.

This paper is concerned with \emph{flat models} for a saucer's man{\oe}uvres. It has a companion 
\cite{dynamics} explaining how to obtain non-flat flying saucer configuration spaces from 
simple geometric data in the 3-space where the saucers navigate.   
\subsection{Configuration space of a flying saucer}
Consider $\bbR^3$ with the orthonormal vectors $(\vec{e}_x,\vec{e}_y,\vec{e}_z)$. 
To specify the position of a flying saucer we need to specify a point 
$\vec{r}=x\vec{e}_x+y\vec{e}_y+z\vec{e}_z$ in $\bbR^3$, and to chose a unit vector 
$\vec{n}$ in $\bbR^3$. The point $\vec{r}$ gives the position of saucer's center of mass 
in 3-space. The vector $\vec{n}$ gives the unit normal to the saucer's disk. 
Thus, the unit vector $\vec{n}$ determines the orientation of the saucer in 3-space. When saucer moves its \emph{center of mass may assume any position in space} $\bbR^3$, and \emph{the endpoint of the normal vector to its disk may assume any position on the unit sphere} $\bbS^2$. This shows that \emph{the configuration space of the flying saucer is} $${\mathcal C}=\bbR^3\times \bbS^2.$$ 
For the later convenience we recall that in the standard coordinates $(x,y,z)$ in $\bbR^3$, and $(\theta,\phi)$ on $\bbS^2$, the respective volume forms are:
$$vol_{\bbR^3}=\der x\dz\der y\dz \der z,\quad\quad vol_{\bbS^2}=sin\theta\der\theta\dz\der\phi.$$
Using a unit vector $\vec{n}$, the volume $vol_{\bbS^2}$ can be expressed in a 
coordinate-free way as:
$$vol_{\bbS^2}=\tfrac12 (\vec{n}\times \der{\vec n})\dz\der\vec{n}.$$
This formula can be checked by substituting in it $\vec{n}=cos\phi sin\theta\vec{e}_x+sin\phi sin\theta\vec{e}_y+cos\theta\vec{e}_z$, and comparing it with $sin\theta\der\theta\dz\der\phi.$

Note that we obviously have:
$$vol_{\bbS^2}\dz vol_{\bbR^3}\neq 0,~{\rm everywhere}~{\rm in}~{\mathcal C}=\bbR^3\times\bbS^2.$$

\subsection{Movement of a flying saucer}
A flying saucer, when it moves, describes a curve
$$\gamma(t)=(\vec{r}(t),\vec{n}(t))\subset{\mathcal C}$$
in the configuration space $\mathcal C$. In the following we will only consider \emph{piecewise smooth movements} of the saucer, corresponding to piecewise smooth curves $\gamma(t)$. 

Because of the split ${\mathcal C}=\bbR^3\times\bbS^2$ the curve $\gamma(t)=(\vec{r}(t),\vec{n}(t))$ defines two curves 
$\gamma_1(t)=\vec{r}(t)\in\bbR^3$ and $\gamma_2(t)=\vec{n}(t)\in\bbS^2$. 

Consider now the second curve, $\gamma_2(t)$, and two moments of time $t$ and $t+dt$. This corresponds to two unit vectors $\vec{n}(t)$ and $\vec{n}(t+dt)=\vec{n}(t)+\der \vec{n}(t)$ anchored at the origin in $\bbR^3$, corresponding to two orientations, at time $t$ and $t+\der t$ respectively, of the disk of the saucer in the 3-space. The vector $\vec{n}(t)$ is orthogonal to the change vector $\der\vec{n}(t)$ because  differentiating $\vec{n}^2(t)=1$ we get $\vec{n}(t)\der \vec{n}(t)=0$. Thus, if we consider a movement $\gamma(t)=(\vec{r}(t),\vec{n}(t))$ of a saucer during which $\vec{n}(t)$ changes, we have a triple of orthogonal vectors $(\vec{n}(t), \der\vec{n}(t), \vec{n}(t)\times \der\vec{n}(t))$ attached to every point of the curve $\gamma_2(t)$. The physical interpretation of the vector $\vec{n}(t)\times\der\vec{n}(t)$ is such that this vector is tangent to the line $l(t)$ in the plane of the saucer around which the saucer rotates when changing its orientation from the one described by $\vec{n}(t)$ to the one described by $\vec{n}(t+\der t)$.   

\subsection{Flying saucer movement is nonholonomic}
For the purpose of this article we assume that flying saucers \emph{cannot move in the direction perpendicular to their disk}. Thus our main assumption about the flying saucer moving along the curve $$\gamma(t)=(\vec{r}(t),\vec{n}(t))$$ is that at every moment of the movement we have:
\be
\dot{\vec{r}}(t) \perp \vec{n}(t).\label{nh}\ee
This gives a linear restriction on possible velocities of the flying saucer, placing its kinematics in the realm of \emph{nonholonomic} systems.
In terms of the geometry of the configuration space $\mathcal C$, the nonholonomic condition (\ref{nh}) defines a 4-dimensional distribution $\mathcal D$ in $\mathcal C$ which at each point $(\vec{r},\vec{n})\in\mathcal C$ is annihilated by the one form
\be\omega^0=\vec{n}\cdot\der\vec{r}.\label{con}\ee 
Here we used the fact that $\der\vec{r}(t)=\dot{\vec{r}}(t)\der t$. Of course, we also have $\der\vec{n}(t)=\dot{\vec{n}}(t)\der t$, etc. 

We end this section with a formal definition of the 4-distribution $\mathcal D$ of the admissible velocities:
$${\mathcal D}=\{~\Gamma({\rm T}{\mathcal C})\ni X~~|~~ X\hook(\vec{n}\cdot\der\vec{r}) =0\}={\omega^0}^\perp.$$

\subsection{Naked flying saucer is a contact five manifold} 
Calculating the maximal wedge product $\der\omega^0\dz\der\omega^0\dz\omega^0$ of forms obtained naturally from the annihilator $\omega^0$ of the velocity distribution $\mathcal D$ we get:
$$\begin{aligned}
\der \omega^0&\dz\der\omega^0\dz\omega^0=\\&(\der\vec{n}\dz\vec\der{r})\dz (\der\vec{n}\dz\vec\der{r})\dz(\vec{n}\der\vec{r})=
\der n_j\dz\der x_j\dz\der n_k\dz \der x_k\dz n_i\der x_i=\\&
-\epsilon_{ijk}n_i\der n_j\dz\der n_k\dz\der x\dz\der y\dz\der z=-(\vec{n}\times\der \vec{n})\dz\der\vec{n}\dz \der x\dz\der y\dz\der z=\\-2&vol_{\bbS^2}\dz vol_{\bbR^3},
\end{aligned}$$
and thus $\der\omega^0\dz\der\omega^0\dz\omega^0$ \emph{is never zero} on the configuration space $\mathcal C$ of the saucer. This means that the rank 4-distribution $\mathcal D$ of possible velocities of the saucer is a \emph{contact} distribution on $\mathcal C$. 

Let us summarize our considerations in the following proposition.
\begin{proposition}
The five-dimensional configuration space $\mathcal C=\bbR^3\times\bbS^2$ of a flying saucer is naturally a \emph{contact} manifold $({\mathcal C},{\mathcal D})$. In the natural coordinates $(\vec{r},\vec{n})$ in $\mathcal C$ the contact distribution $\mathcal D$ is given as the annihilator of a field of a 1-form $\omega^0=\vec{n}\der\vec{r}$, which is given on $\mathcal C$ up to a scale. As with every contact distribution, the distribution $\mathcal D$ is equipped with a family of nondegenerate two forms $\Omega=\der \omega^0$, which are given on the distribution also up to a scale. 
\end{proposition}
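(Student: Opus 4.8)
The plan is to read the contact condition directly off the volume-form computation already carried out, and then to verify the two ``up to scale'' claims separately.

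First I would recall the general criterion: on a $(2n+1)$-dimensional manifold a corank-one distribution $\mathcal{D}=\ker\omega^0$ is contact precisely when the top-degree form $\omega^0\dz(\der\omega^0)^{\dz n}$ is nowhere vanishing. Here $\dim\mathcal{C}=5$, so $n=2$ and the relevant five-form is $\omega^0\dz\der\omega^0\dz\der\omega^0$. The calculation preceding the proposition gives exactly
$$\der\omega^0\dz\der\omega^0\dz\omega^0=-2\,vol_{\bbS^2}\dz vol_{\bbR^3},$$
and since we already observed that $vol_{\bbS^2}\dz vol_{\bbR^3}\neq 0$ everywhere on $\mathcal{C}=\bbR^3\times\bbS^2$, this five-form never vanishes. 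That settles the assertion that $(\mathcal{C},\mathcal{D})$ is a contact manifold, with $\mathcal{D}$ the annihilator of $\omega^0=\vec{n}\cdot\der\vec{r}$.

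Next I would address the ``up to scale'' statements. Since $\mathcal{D}$ has corank one, its annihilator is a real line subbundle of $\mathrm{T}^*\mathcal{C}$, so every other nonvanishing defining one-form is of the form $f\omega^0$ with $f$ a nowhere-zero function. Differentiating,
$$\der(f\omega^0)=\der f\dz\omega^0+f\,\der\omega^0,$$
and restricting to $\mathcal{D}$, where $\omega^0$ vanishes, the first term drops out; hence $\der(f\omega^0)|_{\mathcal{D}}=f\,\der\omega^0|_{\mathcal{D}}$. This shows that the two-form $\Omega=\der\omega^0$, regarded on the distribution $\mathcal{D}$, is independent of the choice of defining one-form up to the nonvanishing scale $f$, exactly as claimed.

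Finally I would note that nondegeneracy of $\Omega$ on the rank-four distribution $\mathcal{D}$ is equivalent to the contact condition just verified: the nonvanishing of $\omega^0\dz(\der\omega^0)^{\dz 2}$ forces $\der\omega^0$ to restrict to a symplectic, hence nondegenerate, form on $\ker\omega^0=\mathcal{D}$. There is no serious obstacle here, the essential content being the single wedge-product identity above; the only point deserving care is the scale-invariance argument, namely checking that the term $\der f\dz\omega^0$ genuinely dies upon restriction to $\mathcal{D}$, which is immediate since $\omega^0|_{\mathcal{D}}=0$.
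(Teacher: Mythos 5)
Your proposal is correct and takes essentially the same route as the paper: the contact property is read off the single wedge-product identity $\der\omega^0\dz\der\omega^0\dz\omega^0=-2\,vol_{\bbS^2}\dz vol_{\bbR^3}\neq 0$, which is exactly the computation the paper performs. Your explicit check that $\der(f\omega^0)|_{\mathcal D}=f\,\der\omega^0|_{\mathcal D}$ merely spells out the standard ``up to scale'' remark that the paper states without proof, so it adds care but not a different method.
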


In the following, occasionally, we will use local coordinates in $\mathcal C$. In particular, we will consider an open subset ${\mathcal C}_>$ of $\mathcal C$ defined by:
$${\mathcal C}_>=\{~{\mathcal C}\ni (\vec{r},\vec{n}) ~~|~~\vec{n}\cdot \vec{e}_z>0~\}.$$ In $\mathcal C_>$ the points $\vec{n}$ are taken from the \emph{northern hemisphere} of $\bbS^2$. Thus, ${\mathcal C}_>$ is diffeomorphic to $\bbR^5$, $${\mathcal C}_>\equiv \bbR^5,$$ and as such may be parametrized by \emph{five real numbers} $(x,y,z,a,b)$. Here 
\be
x\vec{e}_x+y\vec{e}_y+z\vec{e}_z=\vec{r},\label{tr1}\ee 
and $(a,b)$ are determined by 
the requirement that 
$$z=ax+by~{\rm is}~{\rm the}~{\rm plane}~{\rm orthogonal}~{\rm to}~\vec{n}~{\rm and}~{\rm passing}~{\rm through}~{\rm the}~{origin}~{\rm in}~\bbR^3.$$ In particular we have:
\be
\vec{n}=\frac{\vec{N}}{||\vec{N}||},\quad{\rm and}\quad\vec{N}=-a \vec{e}_x-b\vec{e}_y+\vec{e}_z .\label{tr2}\ee
We can easily pass between $(\vec{r},\vec{n})\in {\mathcal C}_>$ and $(x,y,z,a,b)\in\bbR^5$ using (\ref{tr1}) and (\ref{tr2}). 

In our parametrization $(x,y,z,a,b)$ of ${\mathcal C}_>$ the one form $\omega^0$ reads:
$$\omega^0=\vec{n}\cdot\der\vec{r}=\frac{\der z - a \der x-b\der y}{||\vec{N}||}$$
and we can rescale it to obtain: 
\be\omega^0=\der z - a \der x-b\der y.\label{con1}\ee
Thus, the parametrization is the standard Darboux parametrization $(x_i,z,p_i)$ of contact forms, which in any odd dimension can be always locally written as 
$\omega=\der z+p_i\der x_i$. In our case $(x_1,x_2)=(x,y)$ and $(p_1,p_2)=(-a,-b)$. The advantage of using this parametrization is that, in particular we immediately see that:
$$\der\omega^0\dz\der\omega^0\dz\omega^0=2\der x\dz\der y\dz\der a\dz\der b\dz\der z\neq 0.$$

The distribution $\mathcal D$ is crucial to describe the process of controlling a flying saucer. If $(E_1,E_2,E_3,E_4)$ are vector fields that \emph{locally} span $\mathcal D$ and the curve $\gamma(t)\subset\mathcal C$ is the trajectory of the saucer, then the \emph{velocity} of the saucer $\dot{\gamma}(t)$ must be of the form
$$\dot{\gamma}(t)=u_1(t)E_1+u_2(t)E_2+u_3(t)E_3+u_4(t)E_4.$$
The functions $(u_1(t),u_2(t),u_3(t),u_4(t))$ are called the \emph{controls} of the saucer.  

In our parametrization of ${\mathcal C}_>$ we have:
$${\mathcal D}=\Span(E_1,E_2,E_3,E_4),$$
where
$$E_1=\partial_x+a\partial_z,\quad E_2=\partial_y+b\partial_z,\quad E_3=\partial_b,\quad E_4=\partial_a.$$

In the following sections we will use the directions $\vec{n}(t)$ and $\der\vec{n}(t)$, which at every moment of time are defined by the curve 
$\gamma(t)\subset{\mathcal C}$, to describe \emph{two different classes of movements} of a flying saucer. We will call these movements `\emph{attacking mode aerobatics}' and `\emph{landing mode aerobatics}', respectively. They will be described in Sections \ref{sa1}-\ref{sa2}.

If there are no further restrictions on the movement of the saucer its \emph{pilot can use the engines} to profile the control functions $(u_1(t),u_2(t),u_3(t),u_4(t))$ properly, to give a desired shape of his trajectory $\gamma_1(t)=\vec{r}(t)$ in 3-space $\bbR^3$. Since the distribution $\mathcal D$ is contact, he is assured by the Chow-Raszewski theorem, that by staying on paths which are always tangent to $\mathcal D$, he can move his saucer from any point in the configuration space $\mathcal C$ to any other point. 

To make the life of a pilot of a flying saucer more adventurous, we will now make further restrictions on the man{\oe}uvres. This will be \emph{aerobatic} man{\oe}uvres of the flying saucer, which will relate its velocity $\dot{\vec{r}}(t)$ in space to its locally orthogonal frame given by $(~\vec{n}(t),\der\vec{n}(t),\vec{n}(t)\times\der\vec{n}(t)~)$. We assume that the pilot of a saucer can observe the data given by $$c(t)=(\vec{r}(t),\vec{n}(t),\dot{\vec{r}}(t),\dot{\vec{n}}(t),\vec{n}(t)\times\dot{\vec{n}}(t))$$
on the instruments of the saucer, and that he can use his controls to impose appropriate relations between the components of the vector $c(t)$. We call this relations \emph{aerobatics}.

\section{Attacking mode aerobatics - 5-dimensional Legendrean contact structure}\label{sa1}  An \emph{attacking mode aerobatic man{\oe}uvre} consists in a movement of a flying saucer in such a way that the the speed $\dot{\vec{r}}(t)$ of the saucer in 3-space is, at every moment of time, parallel to the line in the plane of the saucer around which the saucer is momentarily rotating. More formally: in an attacking mode aerobatic man{\oe}uvre, at every moment $t$ of time, \emph{the speed} $\dot{\vec{r}}(t)$ \emph{of the saucer in 3-space is parallel to the line defined by the vector} $\vec{n}(t)\times\der\vec{n}(t)$. Recall that the vector $\vec{n}(t)\times\der\vec{n}(t)$ is always in the plane of the saucer, so this rule alone implies that $\dot{\vec{r}}\perp\vec{n}$. Thus the movement during the attacking mode aerobatic man{\oe}uvre automatically satisfies the nonholonomic constraint stating that the corresponding curve in the configuration space is tangent to $\mathcal D$.  

In physical terms this rule says that at the moment when the saucer spins around the line tangent to $\vec{n}(t)\times\der\vec{n}(t)$, its center of mass has velocity along this line. An example of such a movement is a movement of a rifle bullet, which spins around the axis determined by the tangent to its trajectory.

If the pilot of a saucer does only attacking mode aerobatics his configuration space $\mathcal C$ is equipped with a richer structure than just $({\mathcal C},{\mathcal D})$. 

\subsection{Conformal metric on the distribution} The \emph{attacking mode aerobatics rule} says that at every moment of time $t$ we have:
$$\der\vec{r}(t) ~||~ \Big(\vec{n}(t)\times\der\vec{n}(t)\Big).$$
This means that the cross product of the two vectors $\der\vec{r}$ and $\vec{n}\times\der\vec{n}$ is zero:
\be \der\vec{r}\times(\vec{n}\times\der\vec{n})~=~0.\label{metz1}\ee
An identity from the vector calculus then yields:
$$0=\der\vec{r}\times(\vec{n}\times\der\vec{n})=(\der\vec{r}\cdot\der\vec{n})\vec{n}-(\vec{n}\cdot\der\vec{r})\der\vec{n}.$$
Since on the distribution $\mathcal D$ the form $\omega^0=\vec{n}\cdot\der\vec{r}\equiv 0$, we get 
$$(\der\vec{r}\cdot\der\vec{n})\vec{n}=0,$$
which leads to the conclusion that the saucer performs the attacking mode aerobatic manouever if and only if its path $(\vec{r}(t),\vec{n}(t))$ in the configuration space satisfies the following conditions: 
\be
\dot{\vec{r}}~\cdot~\dot{\vec{n}}=0.\label{metz}\ee
Of course we need also to satisfy the tangency to the distribution condition, which means that 
$$\vec{n}~\cdot~\dot{\vec{r}}=0.$$
Because of this condition, not all components $(\dot{x},\dot{y},\dot{z})$ of the vector $\dot{\vec{r}}$ are independent. Since $\vec{n}$ is unital at every moment of time at least one of its components $(n_x,n_y,n_z)$ does \emph{not} vanish. Without loss of generality we can assume that at the moment $t$ the component $n_z\neq 0$. Then multiplying equation (\ref{metz}) by $n_z$ we get:
$$0=n_z \dot{\vec{r}}~\cdot~\dot{\vec{n}}=n_z(\dot{x}\dot{n}_x+\dot{y}\dot{n}_y)+\dot{n}_zn_z\dot{z}=n_z(\dot{x}\dot{n}_x+\dot{y}\dot{n}_y)+\dot{n}_z(-n_x\dot{x}-n_y\dot{y}),$$
where we have used the tangency to the distribution condition $n_z\dot{z}=-n_x\dot{x}-n_y\dot{y}$. This eventually shows that
\be (n_z\dot{n}_x-n_x\dot{n}_z)\dot{x}-(n_y\dot{n}_z-n_z\dot{n}_y)\dot{y}=0.\label{metz2}\ee
This is nothing but the condition for the vanishing of the $z$ component of the equation (\ref{metz1}). But the above analysis ensures that, on the open set in $\mathcal C$ in which $n_z\neq 0$, the conditions for vanishing of the two other components of (\ref{metz1}), which \emph{a'priori} may be different from the vanishing of the $z$ component, are actually equivalent to the condition (\ref{metz2}). If $n_z>0$ we can use our parametrization in which $(\vec{r},\vec{n})=(x,y,z,a,b)$. In this parametrization
$$\dot{\vec{n}}=-(\dot{f}a+f\dot{a})\vec{e}_x-(\dot{f}b+f\dot{b})\vec{e}_y+\dot{f}\vec{e}_z,\quad{\rm with}\quad f=\frac{1}{\sqrt{1+a^2+b^2}}.
$$
Hence
$$ n_z\dot{n}_x-n_x\dot{n}_z=-f^2\dot{a},\quad\quad\quad\quad n_y\dot{n}_z-n_z\dot{n}_y=f^2\dot{b},
$$
and the equation (\ref{metz2}) becomes equivalent to:
\be \dot{a}\dot{x}+\dot{b}\dot{y}=0.\label{nol}\ee
This equation can be interpreted as follows: a saucer performs an attacking mode aerobatic man{\oe}uvre if its trajectory $\gamma(t)=(x(t),y(t),z(t),a(t),b(t))$ in the configuration space is tangent to $\mathcal D$ and is a \emph{null curve} in a split-signature metric 
\be
g=2(\der x\der a+\der y\der b)\quad{\rm defined}~{\rm on}~ {\mathcal D}.\label{mett}\ee
 Note that the nullity condition (\ref{nol}) equips $\mathcal D$ with a \emph{conformal class of metrics} $[g]$ rather, then just a single metric $g$. Having defined this conformal class, one can forget about the physical definition of the attacking mode aerobatic man{\oe}uvre, as we defined it in terms of the geometry in the 3-space, and simply say that the \emph{a saucer performs attacking mode aerobatic man{\oe}uvre if and only if its trajectories in the configuration space are  tangent to} $\mathcal D$ \emph{ and null with respect to} $[g]$. We stress that the conformal class $[g]$ is only defined on $\mathcal D$. There, it can be represented in a coordinate free way by 
\be
g~=~\der \vec{r}~\cdot~\der \vec{n}.\label{mett1}\ee
Thus, when a pilot of a flying saucer is capable of performing attacking mode aerobatic moanouvers, the configuration space $\mathcal C$ of his saucer is equipped with a \emph{contact subconformal split signature geometry}. The word \emph{contact} refers to the contact distribution $\mathcal D$, and the words \emph{subconformal split signature} refers to the structure $[g]$ associated with the conformal class of a metric $g$, which on $\mathcal D$ has \emph{split signature} $(+,+,-,-)$. 
\subsection{Structure group of the contact subconformal structure}\label{secstrgr} 
Since in our parametrization the contact form $\omega^0$ can be chosen so that $$\omega^0=\der z -a \der x-b\der y,$$ then we have $\Omega=\der\omega^0=\der x\dz\der a+\der y\dz\der b$. This enables us to introduce one forms 
$$\omega^1=\der x,\quad\omega^2=\der y,\quad\omega^3=\der b,\quad\omega^4=\der a$$
such that
$$\begin{aligned}
g&=g_{ij}\omega^i\omega^j=2(\omega^1\omega^4+\omega^2\omega^3),\\
\Omega&=\tfrac12\Omega_{ij}\omega^i\dz\omega^j=\omega^1\dz\omega^4+\omega^2\dz\omega^3.\end{aligned}$$
Here, and in the following, the Latin indices $i,j,k,l$ run through the numbers 1,2,3,4.

Note that 
$$\omega^0\dz\omega^1\dz\omega^2\dz\omega^3\dz\omega^4\neq 0,$$
hence we have a coframe $(\omega^0,\omega^1,\omega^2,\omega^3,\omega^4)$ on ${\mathcal C}_>$.

The subconformal contact geometry $(M,{\mathcal D},[g],[\Omega])$ is then defined on $M$ by classes of metrics $[g]$ on $\mathcal D$ related to $g$ via the equivalence relation
\be  \bar{g}\sim g\quad{\rm  iff}\quad \bar{g}=f_1 g,\quad f_1>0,\label{gg}\ee
and by classes of 2-forms $[\Omega]$ on $\mathcal D$ related to $\Omega$ via the  equivalence relation
\be  \bar{\Omega}\sim\Omega\quad{\rm iff}\quad \bar{\Omega}=f_2\Omega,\quad f_2> 0.\label{omom}\ee 

Since both $g$ and $\Omega$ are only defined on the distribution $\mathcal D$, and since they, as well as $\omega^0$, are only defined up to scales, then the coframe $(\omega^0,\omega^1,\omega^2,\omega^3,\omega^4)$ is defined up to the following transformations:
$$\begin{aligned}
\omega^0&\mapsto \bar{\omega}^0=\alpha \omega^0\\
\omega^i&\mapsto \bar{\omega}^i=\alpha^i_{~j}\omega^j+ \beta^i\omega^0
\end{aligned}
$$
with $\alpha\det(\alpha^i_{~j})\neq 0$ and with
\be 
\alpha^k_{~i}\alpha^l_{~j}g_{kl}=c_1g_{ij}\quad\&\quad\alpha^k_{~i}\alpha^l_{~j}\Omega_{kl}=c_2\Omega_{ij},\quad\quad c_1c_2\neq 0.\label{pres}\ee
The $4\times 4$ matrices $(\alpha^i_{~j})$ must satisfy (\ref{pres}) to preserve $g$ and $\Omega$ up to scales on $\mathcal D$. Before these equations were imposed, at each point of $\mathcal C$, the matrices  $(\alpha^i_{~j})$ have values in the structure group of $\mathcal D$, which is just the full general linear group $G=\glg(4,\bbR)$. Equations (\ref{pres}) reduce this group to a subgroup $G_0$ which we determine now. We do it by looking at the Lie algebra of $G_0$.

We write the matrix $(\alpha^i_{~j})$ in the form $(\alpha^i_{~j}(s))=(\exp(s Y)^i_{~j})$ and then take the derivative $\frac{\der}{\der s}_{|s=0}$ on both sides of the equations \ref{pres}. This results in the following linear equations for the matrices $(Y^i_{~j})$ generating the Lie algebra of $G_0$:
\be 
Y^k_{~i}g_{kj}+Y^k_{~j}g_{ik}=f_1g_{ij}\quad\&\quad Y^k_{~i}\Omega_{kj}+Y^k_{~j}\Omega_{ik}=f_2\Omega_{ij}.\label{presa}\ee
It follows that the \emph{space of solutions} to the equations (\ref{presa}) \emph{is 5-dimensional}. It is equipped with the structure of a 5-dimensional Lie algebra with the commutator as the usual commutator of $4\times 4$ matrices. This Lie algebra is the Lie algebra $\mathfrak{g}_0$ of the reduced by (\ref{presa}) structure group $G_0$. We describe it in full detail in the following proposition.
\begin{proposition}\label{prY}
The Lie algebra of $\mathfrak{g}_0$ that preserves the forms $g$ and $\Omega$ up to scales is:
$$\mathfrak{g}_0=\Span_\bbR(Y_1,Y_2,Y_3,Y_4,Y_5),$$
where the generators $Y_A$ are the matrices:
\be\begin{aligned}
 Y_1&=\bma 1&0&0&0\\
         0&-1&0&0\\
         0&0&1&0 \\
         0&0&0&-1 \ema,\quad 
Y_2=\bma 0&1&0&0\\
         0&0&0&0\\
                  0&0&0&-1 \\
         0&0&0&0 \ema,\quad 
Y_3=\bma 0&0&0&0\\
         1&0&0&0\\
                 0&0&0&0 \\
         0&0&-1&0 \ema,\\&\\
 &\quad\quad\quad\quad Y_4=\bma 1&0&0&0\\
         0&1&0&0\\
         0&0&-1&0 \\
                  0&0&0&-1 \ema,\quad 
Y_5=\bma 1&0&0&0\\
         0&1&0&0\\
                  0&0&1&0 \\
         0&0&0&1 \ema={\rm Id}.
\end{aligned}\label{bas4}
\ee 
The commutation relations are
$$[Y_1,Y_2]=2Y_2,\quad [Y_1,Y_3]=-2Y_3,\quad [Y_2,Y_3]=Y_1,$$
and modulo the antisymmetry all the other commutators vanish.

Thus,
$$\mathfrak{g}_0=\slg(2,\bbR)\oplus\bbR^2\subset\glg(4,\bbR).$$
\end{proposition}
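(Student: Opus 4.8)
The plan is to recast the two scaling conditions of (\ref{presa}) as matrix equations and then show that they are jointly governed by a single product structure built from $g$ and $\Omega$. Collecting the components into the symmetric Gram matrix $G=(g_{ij})$ and the antisymmetric matrix $W=(\Omega_{ij})$, and reading $(Y^i_{~j})$ as a matrix $Y$, the equations (\ref{presa}) become
$$Y^{T}G+GY=f_1G\qquad\text{and}\qquad Y^{T}W+WY=f_2W.$$
In the ordering $(\omega^1,\omega^2,\omega^3,\omega^4)$ both $G$ and $W$ are antidiagonal and nondegenerate; in particular $G^{-1}=G$, so I can form the endomorphism $J:=G^{-1}W=GW$. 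A short computation gives $J=\mathrm{diag}(-1,-1,1,1)$, whence $J^{2}=\mathrm{Id}$ and $W=GJ$: the pair $(g,\Omega)$ determines a product (para-complex) structure on $\mathcal D$.

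Next I would eliminate $Y^{T}$. From the $G$-equation, $Y^{T}=f_1\mathrm{Id}-GYG$; substituting this into the $W$-equation and using $GW=J$, $W=GJ$ collapses everything, after left multiplication by $G$, to the single relation $[J,Y]=(f_2-f_1)J$. Multiplying this by $J$ on the right and on the left and using $J^{2}=\mathrm{Id}$ gives $JYJ-Y=(f_2-f_1)\mathrm{Id}$ and $Y-JYJ=(f_2-f_1)\mathrm{Id}$; adding them forces $f_2=f_1$, hence $[J,Y]=0$. Conversely, if $Y$ commutes with $J$ and satisfies the $G$-equation, then $W=GJ$ yields the $W$-equation with the same scale. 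Thus (\ref{presa}) is equivalent to the single pair: $Y$ commutes with $J$, and $Y^{T}G+GY=f_1G$ for some scalar. I expect this decoupling of the two scales to be the crux of the argument; it is the one place where the joint use of the symmetric and the antisymmetric form is essential, and without it the linear system looks far larger.

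Since $J=\mathrm{diag}(-1,-1,1,1)$, commuting with $J$ forces $Y$ to be block diagonal, $Y=\bma A&0\\0&D\ema$, with $2\times2$ blocks adapted to the eigenspace splitting $\bbR^{4}=\bbR^{2}\oplus\bbR^{2}$. Writing $G$ in the same blocks as $\bma 0&S\\ S&0\ema$ with $S=\bma 0&1\\ 1&0\ema$, $S^{2}=\mathrm{Id}$, the $G$-equation becomes the two block equations $A^{T}S+SD=f_1S$ and $D^{T}S+SA=f_1S$, the second being an automatic consequence of the first once $D=f_1\mathrm{Id}-SA^{T}S$. Hence the solutions are parametrized freely by an arbitrary $A\in\gla(2,\bbR)$ and an arbitrary $f_1\in\bbR$, and one reads off $f_1=\tfrac12\,\mathrm{tr}\,Y$. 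This gives dimension $4+1=5$, the dimension claim.

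Finally I would identify the algebra. Because block-diagonal matrices multiply blockwise, $[Y,Y']=\bma [A,A']&0\\ 0&[D,D']\ema$, so the projection $Y\mapsto A$ is a Lie-algebra homomorphism onto $\gla(2,\bbR)$, and since $\mathrm{tr}[Y,Y']=0$ the scalar ($f_1$) direction is central. Reading off the data of $Y_1,\dots,Y_5$ gives $A$-blocks $\mathrm{diag}(1,-1),\,E_{12},\,E_{21},\,\mathrm{Id},\,\mathrm{Id}$ (with $E_{12},E_{21}$ the off-diagonal elementary matrices) and $f_1=0,0,0,0,2$; these are visibly independent, hence a basis. The first three have traceless blocks $H,E,F$ and reproduce $[Y_1,Y_2]=2Y_2$, $[Y_1,Y_3]=-2Y_3$, $[Y_2,Y_3]=Y_1$, while $Y_4$ (the scalar block) and $Y_5=\mathrm{Id}$ are central and span the kernel of $Y\mapsto A_0\in\sla(2,\bbR)$. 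This exhibits $\mathfrak{g}_0=\sla(2,\bbR)\oplus\bbR^{2}$ and completes the proof.
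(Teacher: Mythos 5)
Your proof is correct, and it takes a genuinely different --- and more structural --- route than the paper. The paper offers essentially no derivation: it asserts that the linear system (\ref{presa}) has a $5$-dimensional solution space with the stated basis (in practice a direct solve of the $32$ scalar equations), and only afterwards, in Remark \ref{redu}, \emph{observes} the invariant operator $K=Y_4$ and the block-diagonal shape of the $Y_i$'s. You invert that logic: writing (\ref{presa}) as $Y^TG+GY=f_1G$ and $Y^TW+WY=f_2W$, and introducing $J:=G^{-1}W=\mathrm{diag}(-1,-1,1,1)$ (this is $-K$ in the paper's notation, the harmless sign ambiguity the paper itself acknowledges), you reduce the pair to $[J,Y]=(f_2-f_1)J$ together with the $G$-equation; I verified the elimination (using $G^{-1}=G$, $GW=J$, $W=GJ$), the left/right multiplication trick forcing $f_2=f_1$ and $[J,Y]=0$, and the converse implication. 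So the reducibility of Remark \ref{redu} comes out as \emph{forced} rather than found, and you additionally establish a fact the paper never states at the Lie-algebra level, namely that the two conformal weights must coincide, $f_1=f_2$. The block reduction $Y=\mathrm{diag}(A,D)$ with $G=\begin{pmatrix}0&S\\S&0\end{pmatrix}$ and $D=f_1\mathrm{Id}-SA^TS$ then yields the dimension count $4+1=5$ transparently, and your $(A,f_1)$-data for $Y_1,\dots,Y_5$, namely $A$-blocks $H,E,F,\mathrm{Id},\mathrm{Id}$ with $f_1=0,0,0,0,2$ and $f_1=\tfrac12\operatorname{tr}Y$, are all accurate, as are the resulting brackets and the identification $\mathfrak{g}_0\cong\sla(2,\bbR)\oplus\bbR^2$. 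One cosmetic point: centrality of $Y_4,Y_5$ follows most directly from their being block-scalar (hence commuting with every block-diagonal matrix); the observation $\operatorname{tr}[Y,Y']=0$ by itself only shows that brackets land in the slice $f_1=0$. As for what each approach buys: the paper's direct solve is the quickest way to a verified basis, while yours explains \emph{why} the answer has the shape it does and generalizes verbatim to any pair $(g,\Omega)$ on a rank-$4$ distribution whose operator $G^{-1}W$ squares to the identity with two $2$-dimensional eigenspaces --- exactly the compatibility notion the paper introduces later for Legendrean contact structures.
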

\begin{remark}\label{redu}
Note that the obtained 4-dimensional representation of the reduced structure Lie algebra $\mathfrak{g}_0$ of the distribution $\mathcal D$ is \emph{reducible}!  
The representation space $\bbR^4$ decomposes into $\bbR^4=\bbR^2\oplus\bbR^2$, with the two $\mathfrak{g}_0$-invariant subspaces given as eigenspaces of a $\mathfrak{g}_0$-invariant operator
$$K=\bma 1&0&0&0\\
         0&1&0&0\\
         0&0&-1&0 \\
                  0&0&0&-1 \ema=Y_4$$
Since $[K,Y_i]=0$ for all $i=1,2,\dots 5$, and since $K^2={\rm Id}$, then the two $\mathfrak{g}_0$-invariant vector subspaces of $\bbR^4$ are eigenspaces of $K$ with the respective eigenvalues +1 and -1. Note that all  $Y_i$s, $i=1,2,\dots,5$, have block diagonal form 
$$Y_i=\bma A_i&0\\0&B_i\ema,$$
where $A_i$ and $B_i$ are $2\times 2$ real matrices!
\end{remark}
By exponentiation of the generators $Y_i$ of $\mathfrak{g}_0$ given in  Proposition \ref{prY} we obtain the group elements of ${\bf G}_0$. This leads to the following ${\bf G}$-structure on the configuration space $\mathcal C$ of a flying saucer capable of performing attacking mode aerobatics. 
\begin{definition}
The subconformal contact geometry $({\mathcal C}, [g], [\Omega])$ on the configuration space $\mathcal C$ of a flying saucer capable of performing attacking mode aerobatics consists of a coframe $(\omega^\mu)$, $\mu=0,1,\dots,4$, on ${\mathcal C}$, with local representative 
$$ (\omega^0,\omega^1,\dots,\omega^4)=(\der z -a \der x - b \der y,\der x,\der y, \der b,\der a),$$
which is given up to the following transformations 
\be\bma\bar{\omega}^0\\\bar{\omega}^1\\\bar{\omega}^2\\\bar{\omega}^3\\\bar{\omega}^4\ema=\bma 
s_6&0&0&0&0\\
s_{7}&s_1&s_2&0&0\\
s_{8}&s_3& s_4&0&0\\
s_{9}&0&0&s_5 s_1&-s_5 s_2\\
s_{10}&0&0&-s_5s_3&s_5 s_4
\ema\bma\omega^0\\\omega^1\\\omega^2\\\omega^3\\\omega^4\ema.\label{the}\ee
The functions $s_1,s_2,\dots, s_{10}$ appearing here must satisfy $(s_1s_4-s_2s_3)s_5 s_6\neq 0$ at each point of ${\mathcal C}$. The classes $[g]$ and $[\Omega]$ are then represented by the metric $\bar{g}=2(\bar{\omega}^1\bar{\omega}^4+\bar{\omega}^2\bar{\omega}^3)$ and by the 2-form $\bar{\Omega}= \bar{\omega}^1\dz\bar{\omega}^4+\bar{\omega}^2\dz\bar{\omega}^3$, respectively. 
\end{definition}

\subsection{Legendrean contact structure}
We now return to Remark \ref{redu}, in which we observed that the subconformal contact structure $([g],[\Omega])$ defines the Lie algebra $\mathfrak{g}_0$ in the \emph{reducible} representation, with the operator $K$ which at every point $(\vec{r},\vec{n})\in\mathcal C$ splits the 4-dimensional vector space ${\mathcal D}_{(\vec{r},\vec{n})}$ of the distribution $\mathcal D$ onto 2-dimensional eigenspaces of $K$. The invariant operator $K$ is entirely defined in terms of the classes $([g],[\Omega])$ defining the subconformal contact structure on $\mathcal C$. To see this in a more geometric way than in the algebraic one described by Remark \ref{redu}, choose representatives $\bar{g}$ and $\bar{\Omega}$ of their respective conformal classes $[g]$ and $[\Omega]$ given by (\ref{gg}) and (\ref{omom}). Define $\tilde{K}$ on $\mathcal D$  via the equation:
\be \bar{g}(\tilde{K}\cdot,\cdot)=\bar{\Omega}(\cdot,\cdot)\label{her}\ee
or, since $g$ is invertible on $\mathcal D$, via:
$$\tilde{K}^i_{~j}=\bar{g}^{ik}\bar{\Omega}_{kj}.$$
Then a short calculation shows that in a basis $(\omega^i)$, $i=1,2,3,4$, as defined in Section \ref{secstrgr}, the operator $\tilde{K}$ is: 
$$\tilde{K}=\frac{f_2}{f_1}Y_4.$$
In particular, its square is proportional to the identity, $\tilde{K}^2=\frac{f_2^2}{f_1^2} Id$. This defines $K$, modulo a sign, by the condition that $\tilde{K}^2=Id$ and $K:=\tilde{K}$. For this $f_2^2=f_1^2$, which always can be solved.

Now, because $K:{\mathcal D}\to {\mathcal D}$ is such that $K^2={\rm Id}$, it makes the split: 
$${\mathcal D}={\mathcal D}^+\oplus{\mathcal D}^-,$$
with 
$$K{\mathcal D}^\pm=\pm {\mathcal D}^\pm,$$
and one can check that both ${\mathcal D}^\pm$ have rank 2. It is easy to see that the metric $g$, when restricted to each of the spaces ${\mathcal D}^\pm$ separately, identically vanishes. Thus these spaces are \emph{totally null} with respect to $g$. Also $\Omega$ restricted to each of the spaces ${\mathcal D}^\pm$ identically vanishes. Thus both ${\mathcal D}^\pm$, as having rank 2, are \emph{Lagrangean} in $\Omega$. This equips the configuration space $\mathcal C$ of a flying saucer with the so called \emph{Legendrean contact structure}, see \cite{Cap}, Section 4.2.3.  

For the reader not familiar with the monograph \cite{Cap} we recall that a $(2n+1)$-dimensional manifold $M$ is equipped with a \emph{Legendrean contact structure} $({\mathcal D},{\mathcal D}^\pm)$ iff
\begin{itemize}
\item ${\mathcal D}$ is the annihilator of a \emph{contact} one form form $\omega^0$ on $M$, i.e. a one form such that $$\overbrace{\der\omega^0\dz\dots\dz\der\omega^0}^{2n}\dz\omega^0\neq 0,$$
\item we have distinguished split: ${\mathcal D}={\mathcal D}^+\oplus{\mathcal D}^-$,
\item and the spaces ${\mathcal D}^\pm$ are \emph{Lagrangean} i.e. each of them has rank $n$ and the form $\Omega=(\der \omega^0)_{|\mathcal D}$ when restricted to them identically vanishes, $\Omega_{|{\mathcal D}^\pm}\equiv 0$.
\end{itemize}

In special situations Legendrean contact structures $({\mathcal D},{\mathcal D}^\pm)$ can originate from \emph{split signature} conformal structures $[g]$  on $\mathcal D$. This happens when in the class $[g]$ and in the conformal class of symplectic forms $[\Omega=\der\omega^0]$ on $\mathcal D$ there exist respective $g'$ and $\Omega'$ such that the linear operator $K:{\mathcal D}\to \mathcal D$ defined via: 
$$K^i_{~j}={g'}\phantom{}^{ik}\Omega'_{kj}$$
squares to the identity on $\mathcal D$, $K^2=Id_{\mathcal D}$, and has the plus/minus-one-eigenvalues with the eigenspaces of the same dimension. In such case we will say that the split signature conformal structure $[g]$ on $\mathcal D$ is \emph{compatible} with the contact structure $[\omega^0={\mathcal D}^\perp]$ on $\mathcal D$.

In this sense, a flying saucer equipped with the attacking mode aerobatic man{\oe}uvre naturally acquires a conformal structure $[g]$ on its velocity distribution $\mathcal D$ which is compatible with the contact structure given by $\mathcal D$.
  
From now on, when talking about Legendrean contact structures we will restrict to the structures coming from pairs $({\mathcal D},[g])$ with the split signature conformal structure $[g]$ compatible with the contact structure of $\mathcal D$. We therefore may replace the symbol $\mathcal D$ of the distribution with the symbol $[\omega^0]$, which denotes the class of a contact one form $\omega^0$ given on $M$ up to a scale and, instead of writing $(M,{\mathcal D},{\mathcal D}^\pm)$ for such Legendrean contact structures, we will write  $(M,[\omega^0],[g])$.

Often one asks about \emph{equivalences between Legendrean contact structures}. In particular, in our case of two such structures $(M,[\omega^0],[g])$ and $(\bar{M},[\bar{\omega}^0],[\bar{g}])$ \emph{are locally equivalent} iff there exists a local diffeomorphism $\phi:M\to\bar{M}$ such that
$$\begin{aligned}
&\phi^*\bar{\omega}^0=f\omega^0\\
&\phi^*\bar{g}=hg+\tau\omega^0
\end{aligned}$$
with nonvanishing function $f$ and $h$ on $M$ and with a certain one form $\tau$ on $M$. Local \emph{self-equivalences} for  $(M,[\omega^0],[g])$, are called local \emph{symmetries} of  $(M,[\omega^0],[g])$. 

Infinitesimal versions of local symmetries are \emph{infinitesimal symmetries}. These are vector fields $X$ on  $(M,[\omega^0],[g])$ such that 
\be\begin{aligned}
&{\mathcal L}_X\omega^0=p \omega^0\\
&{\mathcal L}_Xg=q g+\mu\omega^0
\end{aligned}\label{sym}\ee
with functions $p$, $q$ and with a certain one form $\mu$ on $M$. It follows that infinitesimal symmetries of a Legendrean contact structure  $(M,[\omega^0],[g])$ form a Lie algebra - a Lie algebra of symmetries of  $(M,[\omega^0],[g])$. It is also known that there exists locally \emph{non}equivalent Legendrean contact structures, and that among all Legendrean contact structures of a given dimension, there is a unique (modulo local equivalence) Legendrean contact structure with the \emph{highest dimension of its Lie algebra of Local symmetries}. This unique structure is called the \emph{flat} Legendrean contact structure, and in dimension $(2n+1)$ of $M$ its \emph{Lie algebra of symmetries is isomorphic} to the simple Lie algebra $\sla(2n,\bbR)$ of dimension $4n^2-1$.

\subsection{Legendrean contact structure for an attacking mode aerobatic monouver} 
Let us summarize our considerations about the geometry of the configuration space of a flying saucer that is equipped to perform attacking mode aerobatic man{\oe}uvres.
\begin{proposition}
The configuration space ${\mathcal C}$ of a flying saucer in attacking mode is naturally equipped with a 5-dimensional Legendrean contact structure $({\mathcal C},[\omega^0],[g])$. Here $\omega^0$ defines the contact distribution  ${\mathcal D}=(\omega^0)^\perp$, and $[g]$ is the man{\oe}uvre induced split signature conformal structure $[g]$ defined on ${\mathcal D}=(\omega^0)^\perp$ and compatible with the contact structure of $\mathcal D$. In the natural coordinates $(\vec{r},\vec{n})$ in ${\mathcal C}=\bbR^3\times\bbS^2$ the contact structure is defined via the contact one form  
$$\omega^0~=~\vec{n}~\cdot~\der\vec{r},$$
and the conformal structure $[g]$ is represented by $g$ with 
$$g~=~\big(~\der\vec{r}~\cdot~\der\vec{n}~\big)_{|\mathcal D}.$$
The attacking mode aerobatic man{\oe}uvre consists in a movement of a saucer along the trajectories in $\mathcal C$ tangent to $\mathcal D$ and null with respect to $[g]$. 
\end{proposition}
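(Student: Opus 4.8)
The plan is to verify, for the pair $({\mathcal C},[\omega^0],[g])$, the three defining conditions of a five-dimensional Legendrean contact structure (the case $n=2$ of the definition recalled above), drawing on the computations already made, and then to check that the resulting structure depends neither on the coordinate chart nor on the chosen representatives. The contact condition is immediate from the opening Proposition: for $\omega^0=\vec{n}\cdot\der\vec{r}$ one has $\der\omega^0\dz\der\omega^0\dz\omega^0=-2\,vol_{\bbS^2}\dz vol_{\bbR^3}\neq 0$ everywhere on $\mathcal C$, so $\mathcal D=(\omega^0)^\perp$ is a contact distribution and $\omega^0$ is fixed up to scale; this is the first bullet of the definition with $n=2$.

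For the remaining two bullets I would invoke the involution constructed in Section \ref{secstrgr}. Choosing representatives $g$ and $\Omega=(\der\omega^0)_{|\mathcal D}$ and defining $\tilde K$ by $g(\tilde K\cdot,\cdot)=\Omega(\cdot,\cdot)$, the calculation there gives $\tilde K=(f_2/f_1)Y_4$, so $\tilde K^2$ is a positive multiple of $\mathrm{Id}$ and a real rescaling produces $K$ with $K^2=\mathrm{Id}$. Its eigenspaces furnish the splitting $\mathcal D=\mathcal D^+\oplus\mathcal D^-$, the global version of the invariant decomposition $\bbR^4=\bbR^2\oplus\bbR^2$ recorded in Remark \ref{redu}; the two summands have equal rank $2$ because $K$ has diagonal entries $(1,1,-1,-1)$. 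Writing $\Omega=\omega^1\dz\omega^4+\omega^2\dz\omega^3$ and $g=2(\omega^1\omega^4+\omega^2\omega^3)$ in the dual frame and restricting to either eigenspace shows in one line that $\Omega_{|\mathcal D^\pm}\equiv 0$ and $g_{|\mathcal D^\pm}\equiv 0$; since each eigenspace has rank $2=n$, both $\mathcal D^\pm$ are Lagrangean and totally null, which supplies the last two bullets.

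It then remains to identify $[g]$ and to argue naturality. From the derivation (\ref{metz1})--(\ref{nol}) the attacking-mode rule $\der\vec{r}\parallel(\vec{n}\times\der\vec{n})$ is equivalent, on $\mathcal D$, to the vanishing of $g=(\der\vec{r}\cdot\der\vec{n})_{|\mathcal D}$, which in the coordinates of ${\mathcal C}_>$ reads $g=2(\der x\,\der a+\der y\,\der b)$ and so has split signature $(+,+,-,-)$; this is exactly the assertion that the man{\oe}uvre is motion along $\mathcal D$-tangent null curves of $[g]$. Both $\omega^0=\vec{n}\cdot\der\vec{r}$ and $g=(\der\vec{r}\cdot\der\vec{n})_{|\mathcal D}$ are manifestly coordinate-free, and the admissible coframe changes (\ref{the}) have a $4\times4$ block realizing the structure group $G_0$ of Proposition \ref{prY}; since these preserve $[g]$ and $[\Omega]$ they preserve $K$ and the decomposition $\mathcal D^\pm$, so the structure is globally well defined on $\mathcal C$, independently of the chart and of the representatives.

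The one genuinely substantive point --- settled already in Section \ref{secstrgr} --- is the compatibility that lets $K$ exist at all: that the two \emph{independently} scaling classes $[g]$ and $[\Omega]$ combine into a field of involutions, i.e. that $\tilde K^2$ is a \emph{positive} multiple of $\mathrm{Id}$, so that $K^2=\mathrm{Id}$ is solvable over $\bbR$ and the $\pm1$-eigenspaces are honest real rank-$2$ subbundles. This is special to the split-signature case and is precisely what upgrades the bare subconformal contact geometry to a genuine Legendrean contact structure; in Lorentzian or definite signature no such real Lagrangean decomposition would exist.
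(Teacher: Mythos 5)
Your proposal is correct and follows essentially the same route as the paper: the contact condition via $\der\omega^0\dz\der\omega^0\dz\omega^0\neq 0$, the man{\oe}uvre-induced metric $g=(\der\vec{r}\cdot\der\vec{n})_{|\mathcal D}$ extracted from (\ref{metz1})--(\ref{nol}), and the compatibility operator $K$ of Section \ref{secstrgr} (with $\tilde{K}=(f_2/f_1)Y_4$ rescaled so that $K^2=\mathrm{Id}$) whose $\pm 1$-eigenspaces give the rank-$2$ totally null, Lagrangean splitting $\mathcal D=\mathcal D^+\oplus\mathcal D^-$. The paper presents this proposition as a summary of exactly those preceding computations, so your verification reproduces its implicit proof step for step, merely organizing it explicitly around the three bullets of the definition.
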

It is interesting to characterize the Legendrean contact structure $({\mathcal C},[\omega^0],[g])$ of a flying saucer with attacking mode aerobatic man{\oe}uvres among all the Legendrean contact structures in dimension 5. For this we determine the algebra of symmetries of $({\mathcal C},[\omega^0],[g])$. 

Working in our parametrization of ${\mathcal C}_>$ we want to find all vector 
fields $$X=A_1\partial_x+A_2\partial_y+A_3\partial_z+A_4\partial_a+A_5\partial_b$$ on ${\mathcal C}_>$
such that the equations (\ref{sym}) hold with 
$$\omega^0=\der z-a \der x-b\der y\quad\quad\quad {\rm and}\quad\quad\quad g=2(\der x\der a+\der y\der b).$$

Using Ian Anderson's wonderful Maple Differential Geometry Package we easily find 15 symmetries. We have the following proposition. 

\begin{proposition}\label{symprop}
The following 15 linearly independent vector fields are symmetries of the attacking mode aerobatics Legendrean contact structure $\Big(~{\mathcal C}_>0,~[\der z - a \der x-b \der y],~[\der x \der a+\der y\der b]~\Big)$:
$$\begin{aligned}
X_1&=z(x\partial_x+y\partial_y+z\partial_z)+(z-ax-by)(a\partial_a+b\partial_b)\\
X_2&=x(x\partial_x+y\partial_y+z\partial_z)+(z-ax-by)\partial_a\\
X_3&=-z\partial_y+b(a\partial_a+b\partial_b)\\
X_4&=-x\partial_y+b\partial_a\\
X_5&=-z\partial_x+a(a\partial_a+b\partial_b)\\
X_6&=-x\partial_x+a\partial_a\\
X_7&=x\partial_z+\partial_a\\
X_8&=y(x\partial_x+y\partial_y+z\partial_z)+(z-ax-by)\partial_b\\
X_9&=-y\partial_x+a\partial_b\\
X_{10}&=x\partial_x+z\partial_z+b\partial_b\\
X_{11}&=y\partial_z+\partial_b\\
X_{12}&=x\partial_x+y\partial_y+z\partial_z\\
X_{13}&=\partial_y\\
X_{14}&=\partial_x\\
X_{15}&=\partial_z.
\end{aligned}
$$ 
\end{proposition}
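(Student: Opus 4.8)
The plan is to verify the two symmetry conditions (\ref{sym}) directly for each of the fifteen fields: since the fields are given explicitly, nothing needs to be found, only checked. Writing $X=A_1\partial_x+A_2\partial_y+A_3\partial_z+A_4\partial_a+A_5\partial_b$, I would first assemble the ingredients common to every case. The contact form $\omega^0=\der z-a\der x-b\der y$ has $\der\omega^0=\der x\dz\der a+\der y\dz\der b$, so that $X\hook\omega^0=A_3-aA_1-bA_2$ and $X\hook\der\omega^0=A_1\der a-A_4\der x+A_2\der b-A_5\der y$; moreover, for any coordinate $u\in\{x,y,z,a,b\}$ one has ${\mathcal L}_X\der u=\der(Xu)$.

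For the first condition I would apply Cartan's formula ${\mathcal L}_X\omega^0=\der(X\hook\omega^0)+X\hook\der\omega^0$ and check that the resulting one-form is a function multiple of $\omega^0$; the proportionality factor is the function $p$, whose precise value is irrelevant to the claim. For the second condition I would expand ${\mathcal L}_Xg$ from $g=2(\der x\der a+\der y\der b)$ by the Leibniz rule, using ${\mathcal L}_X\der u=\der(Xu)$ on each factor, and then verify that the resulting symmetric $2$-tensor equals $qg+\mu\omega^0$ for some function $q$ and one-form $\mu$. In practice I would form ${\mathcal L}_Xg-qg$ with $q$ left undetermined and confirm that every remaining component is proportional to $\omega^0$; this simultaneously pins down $q$ and exhibits $\mu$.

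Linear independence is immediate from the coefficient functions: no nontrivial constant-coefficient combination of the fifteen fields can vanish identically, since matching the $\partial_z$-coefficients already separates the translations $X_{13}=\partial_y$, $X_{14}=\partial_x$, $X_{15}=\partial_z$ from the fields carrying genuinely polynomial coefficients, and continuing through the $\partial_x,\partial_y,\partial_a,\partial_b$ components forces every coefficient to vanish. As a consistency check I would then compute the brackets $[X_i,X_j]$ and confirm that the fifteen fields close into a Lie algebra isomorphic to $\sla(4,\bbR)$, of dimension $4n^2-1=15$ with $n=2$. Since this is the maximal symmetry dimension for a $5$-dimensional Legendrean contact structure, this identifies $({\mathcal C},[\omega^0],[g])$ locally as the flat model and shows that the list of symmetries is complete.

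The main obstacle is bookkeeping rather than ideas. The quadratic fields $X_1$, $X_2$, and $X_8$ produce many terms in both Lie derivatives, and the verification depends on cancellations that become visible only after contributions are rewritten in terms of $\omega^0$; doing this reliably by hand is error-prone, which is exactly why the computer-algebra check the authors mention is the practical route. A genuinely different, coordinate-free proof would identify ${\mathcal C}_>$ with an affine cell of the flag manifold associated to $\sla(4,\bbR)$ and recognize the $X_i$ as the fundamental vector fields of the corresponding group action, but constructing that identification carefully is itself more work than the direct verification.
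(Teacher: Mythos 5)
Your proposal is correct and is essentially the paper's own approach: the paper likewise works in the parametrization $(x,y,z,a,b)$ and establishes the proposition by direct computation with the infinitesimal symmetry equations (\ref{sym}) for $\omega^0=\der z-a\der x-b\der y$ and $g=2(\der x\der a+\der y\der b)$, carried out with Ian Anderson's Maple DifferentialGeometry package. The only difference is that the paper \emph{solves} the equations outright (which additionally yields completeness of the list), whereas you \emph{verify} the fifteen listed fields via the Cartan formula and the Leibniz rule---which is all the statement asserts---and your verification plan and linear-independence check are sound.
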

\begin{remark}
It is worth nothing that out of these 15 symmetries 8 of them, namely $$X_4,X_6,X_7,X_9,X_{11},X_{13},X_{14},X_{15},$$ preserve $\omega^0$ and $g$ \emph{exactly}. For these $X$ we have ${\mathcal L}_X\omega^0= 0$ and ${\mathcal L}_Xg= 0$. Vectors $X_{10}$ and $X_{12}$ correspond to \emph{homotheties} for $\omega^0$ and $g$. For them we have  ${\mathcal L}_X\omega^0=\omega^ 0$ and ${\mathcal L}_Xg=g$. The remaining 5 symmetries, namely $X_1,X_2,X_3,X_5,X_8$, are \emph{proper conformal contact Killings} for $g$. By this we mean that both $q$ and $\mu$ appear as nonzero factors in the equations (\ref{sym}) for these 5 vector fields $X$. 
\end{remark}
 
Since Proposition \ref{symprop} explicitly gives 15 symmetries of  $({\mathcal C}_>0,[\der z - a \der x-b \der y],$
$[\der x \der a+\der y\der b])$, and 15 is the dimension of the Lie algebra $\sla(4,\bbR)$ which is the algebra of symmetries of the flat Legendrean structure in dimension 5, one immediately suspects that the attacking mode aerobatics Legendrean conformal structure is flat. That this is realy the case requires a bit of structural theory. Anderson's Maple Package shows that the algebra defined by $(X_1,X_2,\dots,X_{15})$ is simple and finds the Cartan subalgebra, which turns out to have rank 3. Then further classification procedure shows that its Cartan matrix is of type $A$. To realize that the symmetry Lie algebra is the $\sla(4,\bbR)$ real form of $A_3$ we calculate the signature of the Killing form. Comparing this with the signatures of the Killing forms of the standard models of real forms of $A_3$ realized as $4\times 4$ matrices we eventually prove the following theorem:
\begin{theorem}
The configuration space $({\mathcal C})$ of 
the flying saucer equipped to perform standard aerobatic man{\oe}uvres is the flat 5-dimensional Legendrean contact structure. As such it has $\sla(4,\bbR)$ Lie algebra as the algebra of its symmetries. 
\end{theorem}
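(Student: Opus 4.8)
The plan is to pin the symmetry algebra of $({\mathcal C},[\omega^0],[g])$ between a lower bound supplied by Proposition \ref{symprop} and the sharp upper bound for parabolic geometries, and then to read off its isomorphism type. First I would observe that the fifteen vector fields $X_1,\dots,X_{15}$ of Proposition \ref{symprop} are linearly independent over $\bbR$ — no nontrivial constant-coefficient combination of them vanishes identically, as one sees from their leading terms — so that the Lie algebra $\frak{s}$ of infinitesimal symmetries defined by \nn{sym} has $\dim\frak{s}\ge 15$. Against this I would set the general fact, recalled above from \cite{Cap}, that among all five-dimensional Legendrean contact structures the flat one has the largest symmetry algebra, of dimension $4n^2-1=15$ for $n=2$, and that it is the unique such structure up to local equivalence. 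Hence $\dim\frak{s}\le 15$, forcing $\dim\frak{s}=15$, and the uniqueness clause then identifies $({\mathcal C},[\omega^0],[g])$ with the flat model. This establishes flatness, the first assertion of the theorem, and — granting the real-form statement recalled from \cite{Cap} — already yields $\frak{s}\cong\sla(4,\bbR)$.

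To verify the isomorphism type independently, rather than import the real-form part of the general statement, I would proceed algebraically. Computing the $15\times 15$ table of brackets $[X_A,X_B]$ (conveniently in Anderson's package) confirms that the $X_A$ close on a $15$-dimensional Lie algebra. Forming the Killing form $B(X,Y)=\operatorname{tr}(\operatorname{ad}X\operatorname{ad}Y)$ and checking that its Gram matrix in the basis $(X_A)$ is nondegenerate shows, by Cartan's criterion, that $\frak{s}$ is semisimple; inspecting the bracket relations for the absence of proper ideals shows it is in fact simple. Choosing a Cartan subalgebra then exhibits rank $3$, and the associated Cartan matrix is of type $A$, so that $\frak{s}\otimes\bbC\cong\sla(4,\bbC)$ and $\frak{s}$ is one of the five real forms of $A_3$.

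The decisive step is to single out which real form occurs, and for this the discriminating invariant is the signature of the Killing form. Diagonalising the Gram matrix of $B$ should return signature $+3$, i.e.\ nine positive against six negative eigenvalues. I would then compare with the signatures of the five real forms of $A_3$, each read off from a Cartan decomposition $\frak{k}\oplus\frak{p}$: the compact form $\sua(4)$ gives $-15$; $\sua(3,1)$ gives $-3$; $\sua(2,2)$ gives $+1$; the quaternionic form $\sua^{*}(4)=\sla(2,\mathbb{H})$ gives $-5$; and the split form $\sla(4,\bbR)$, with $\frak{k}=\soa(4)$ of dimension $6$ and $\frak{p}$ the symmetric tracefree matrices of dimension $9$, gives $9-6=+3$. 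These five values are pairwise distinct, so the computed signature $+3$ identifies $\frak{s}\cong\sla(4,\bbR)$ without ambiguity.

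I expect the genuine obstacle to be conceptual rather than computational. The upgrade from ``at least fifteen symmetries'' to flatness rests entirely on the sharp inequality $\dim\frak{s}\le 15$ with equality only in the flat case; it is this imported structural result, not the explicit list of symmetries, that carries the argument. The remaining work — the bracket table, the Killing matrix, and its inertia — is lengthy but mechanical, and the one point demanding care is the signature count, since $+3$ is the sole invariant separating $\sla(4,\bbR)$ from the other real forms of $A_3$.
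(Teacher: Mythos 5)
Your proposal is correct and follows essentially the same route as the paper: both arguments rest on the fifteen explicit symmetries of Proposition \ref{symprop}, on the structural fact recalled from \cite{Cap} that the flat model is the unique $5$-dimensional Legendrean contact structure attaining the maximal symmetry dimension $15$, and on identifying the real form of $A_3$ via simplicity, rank $3$, type-$A$ Cartan matrix, and the signature of the Killing form. Your only departures are organisational — making the upper bound $\dim\frak{s}\le 15$ the explicit engine of the flatness claim, and spelling out the signature values $(-15,-3,+1,-5,+3)$ for the five real forms $\sua(4)$, $\sua(3,1)$, $\sua(2,2)$, $\sua^{*}(4)$, $\sla(4,\bbR)$, a comparison the paper invokes but leaves implicit — and your computed value $+3$ correctly singles out $\sla(4,\bbR)$.
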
 
\section{Landing mode aerobatics - 5-dimensional CR structure}\label{sa2}  There is another man{\oe}uvre of a flying saucer which mathematically is very similar to the attacking mode aerobatic man{\oe}uvre. 

We say that a \emph{landing mode aerobatic man{\oe}uvre} is a movement of the flying saucer which respects the following rule: \emph{at every moment the speed} $\dot{\vec{r}}(t)$ \emph{of the saucer in 3-space is \underline{orthogonal} to both} $n(t)$ \emph{and the line}~$l(t)$. This is the same as saying that \emph{the velocity} $\dot{\vec{r}}(t)$ \emph{of the saucer in 3-space is \underline{parallel} to the line} $l^\perp(t)$.

Once the pilot of a saucer learns this man{\oe}uvre, we have a similar situation as with the attacking mode man{\oe}uvre case: the configuration space $({\mathcal C},{\mathcal D})$ of his saucer is equipped with an additional structure. This structure is as follows.

\subsection{Five dimensional Cauchy-Riemann structure} The \emph{landing mode aerobatics rule} says that at every moment of time $t$ we have:
$$\dot{\vec{r}}(t) ~||~ l^\perp(t).$$
In our coordinates $(x,y,z,a,b)$ this means that the 3-space speed vector $(\dot{x},\dot{y},\dot{z})$ of the saucer is parallel in 3-space to the vector $\Big((1+b^2)\dot{a}-a b \dot{b},(1+a^2)\dot{b}-ab \dot{a},a \dot{a}+b \dot{b}\Big)$ tangent to $l^\perp(t)$. This means that the cross product of these two vectors is zero:
\be
(\dot{x},\dot{y},\dot{z})~\times~\Big((1+b^2)\dot{a}-a b \dot{b},(1+a^2)\dot{b}-ab \dot{a},a \dot{a}+b \dot{b}\Big)~=~0.\label{metr2}\ee
And now, again, the \emph{a priori} three conditions, which are implied by this equation reduce to only one scalar equation owing to:
$$\dot{z}-a\dot{x}-b\dot{y}=0.$$
This single scalar equation equivalent to (\ref{metr2}) reads:
$$\Big((1+a^2)\dot{b}-ab\dot{a}\Big)\dot{x}-\Big((1+b^2)\dot{a}-ab\dot{b})\Big)\dot{y}=0.$$
This defines a \emph{conformal metric} 
\be
\hat{g}=2\Big((1+a^2)\der b-ab\der a\Big)\der x-2\Big((1+b^2)\der a-ab\der b)\Big)\der y\label{met}\ee
on the distribution $\mathcal D$. Clearly, it has $[+,+,-,-]$ signature on $\mathcal D$. 

So now again we realize the aerobatic trajectories, the \emph{landing mode} ones this time, as \emph{null curves} in a \emph{class of split signature conformal metrics on the distribution}. There is, however an important difference:

Let us use the same basis forms on ${\mathcal C}_>$ as in Section \ref{secstrgr}. Let $e_i$ be the duals to $\omega^i$, i.e. $e_i\hook\omega^j=\delta_i^{~j}$, $e_i\hook\omega^0=0$. If we consider, as before, an operator $K=K^i_{~j}e_i\otimes\omega^j$, defined by the metric $$g=g_{ij}\omega^i\omega^j$$ from (\ref{met}), and the
symplectic form on the distribution
$$\Omega=\der \omega^0=\tfrac12\Omega_{ij}\omega^i\dz\omega^j=\der x\dz\der a+\der y \dz\der b,$$
via
$$K^i_{~j}=g^{ik}\Omega_{kj},$$
we discover that now:
$$K^2=\frac{1}{1+a^2+b^2}\bma-1&0&0&0\\0&-1&0&0\\0&0&-1&0\\0&0&0&-1\ema.$$
Thus, after appropriate rescalling of $g$ and $\Omega$ we have now an operator $K$ on the contact distribution $\mathcal D$ that squares to the \emph{minus} identity,
$$K^2=-\id.$$
This equips $({\mathcal C},{\mathcal D})$ with a 5-dimensional \emph{CR structure} of hypersurface type. To check what is the signature of its Levi form we write down the rescaled $K$ explicitly, as
$$K=\frac{1}{\sqrt{1+a^2+b^2}}
  \bma -ab&-(1+b^2)&0&0\\
  1+a^2&ab&0&0\\
  0&0&ab&-(1+a^2)\\0&0&1+b^2&-ab\ema.$$
  Its $\pm i$ eigenspaces ${\mathcal D}_\pm$ (which are subspaces of the \emph{complexification} ${\mathcal D}^\bbC$ of $\mathcal D$) are
  $${{\mathcal D}_\pm}=\Span\Big(\,\pm i(1+a^2)\partial_a+(\sqrt{1+a^2+b^2}\pm iab)\partial_b,\,\pm i(1+b^2)\partial_x+(\sqrt{1+a^2+b^2}\mp iab)\partial_y\,\Big).$$
  Introducing a basis $(Z_1,Z_2,\bar{Z}_1,\bar{Z}_2)$ in the complexification of $\mathcal D$ as:
  $$\begin{aligned}
    &Z_1= i(1+a^2)\partial_a+(\sqrt{1+a^2+b^2}+ iab)\partial_b,\quad Z_2= i(1+b^2)\partial_x+(\sqrt{1+a^2+b^2}- iab)\partial_y\\
    &\bar{Z}_1= -i(1+a^2)\partial_a+(\sqrt{1+a^2+b^2}-iab)\partial_b,\quad \bar{Z}_2=-i(1+b^2)\partial_x+(\sqrt{1+a^2+b^2}+ iab)\partial_y,
  \end{aligned}$$
  with the respective dual basis $(\sigma^1,\sigma^2,\bar{\sigma}^1,\bar{\sigma}^2)$, $Z_A\hook\sigma^b=\delta_A^{~B}$, we see that the symplectic form $\Omega$ on the distribution $\mathcal D$ reads:
  $$\Omega=L_{A\bar{B}}\sigma^A\dz\sigma^{\bar{B}}=-C\sigma^1\dz\bar{\sigma}^2-\bar{C}\bar{\sigma}^1\dz\sigma^2,$$
  with
  $$C=2(1+a^2+b^2+iab\sqrt{1+a^2+b^2}).$$
  The $2\times2$ complex valued matrix $L=(L_{A\bar{B}})$ represents the \emph{Levi form} of the corresponding CR structure, and it reads
  $$L_{A\bar{B}}=\bma 0&-C\\-\bar{C}&0\ema.$$
  It obviously has signature $(1,1)$.

With this, we conclude that the configuration space  $({\mathcal C},{\mathcal D})$ of a flying saucer equipped with the landing mode aerobatic man{\oe}uvre is a 5-dimensional \emph{CR structure} of hypersurface type with Levi form of signature $(1,1)$. 
    
We are thus in an interesting situation:
\begin{itemize}
\item the \emph{attacking mode aerobatic man{\oe}uvre} equips the configuration space $({\mathcal C},{\mathcal D})$ of a flying saucer with \emph{5-dimensional Legendrean contact structure},
\item but the \emph{landing mode aerobatic man{\oe}uvre} equips this configuration space with a \emph{CR  structure} (to see that it is integrable refer to \cite{dynamics}). 
\end{itemize}
We recall that the Legendrean structure associated with the attacking mode man{\oe}uvre is flat, and as such has 15-dimensional group of symmetries. It is therefore natural to ask about the dimension of the Lie algebra of symmetries of the landing mode CR structure. In full analogy with the previous case we have the following proposition.
\begin{proposition}
The symmetry algebra of the CR structure $$\Big(~{\mathcal C}_>0,~[\der z - a \der x-b \der y],~[\Big((1+a^2)\der b-ab\der a\Big)\der x-\Big((1+b^2)\der a-ab\der b)\Big)\der y]~\Big)$$ is the 15-dimensional Lie algebra $\sua(2,2)$. It is spanned by the following 15 vector fields on ${\mathcal C}_>$:
$$\begin{aligned}
X_1&=-z(x\partial_x+y\partial_y)+\tfrac12(x^2+y^2-z^2)\partial_z+\big((1+a^2)x+aby\big)\partial_a+\big((1+b^2)y+abx\big)\partial_b\\
X_2&=\tfrac12(y^2+z^2-x^2)\partial_x-x(y\partial_y+z\partial_z)-\big((1+a^2)z-by\big)\partial_a-a(bz+y)\partial_b\\
X_3&=y(x\partial_x+z\partial_z)+\tfrac12(y^2-x^2-z^2)\partial_y+b(az+x)\partial_a+\big((1+b^2)z-ax\big)\partial_b\\
X_4&=\tfrac12\frac{x^2+y^2+z^2}{\sqrt{a^2+b^2+1}}(\partial_z-a\partial_x-b\partial_y)+\sqrt{a^2+b^2+1}\Big((az+x)\partial_a+(bz+y)\partial_b\Big)\\
X_5&=-z\partial_x+x\partial_z+(a^2+1)\partial_a+ab\partial_b\\
X_6&=-z\partial_y+y\partial_z+ab\partial_a+(b^2+1)\partial_b\\
X_7&=y\partial_x-x\partial_y+b\partial_a-a\partial_b\\
X_8&=\frac{x}{\sqrt{a^2+b^2+1}}(\partial_z-a\partial_x-b\partial_y)+\sqrt{a^2+b^2+1}\partial_a\\
X_9&=\frac{z}{\sqrt{a^2+b^2+1}}(\partial_z-a\partial_x-b\partial_y)+\sqrt{a^2+b^2+1}(a\partial_a+b\partial_b)\\
X_{10}&=\frac{y}{\sqrt{a^2+b^2+1}}(\partial_z-a\partial_x-b\partial_y)+\sqrt{a^2+b^2+1}\partial_b\\
X_{11}&=\frac{1}{\sqrt{a^2+b^2+1}}(\partial_z-a\partial_x-b\partial_y)\\
X_{12}&=x\partial_x+y\partial_y+z\partial_z\\
X_{13}&=\partial_x\\
X_{14}&=\partial_y\\
X_{15}&=\partial_z.
\end{aligned}
$$ 
\end{proposition}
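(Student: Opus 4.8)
The plan is to proceed in exact analogy with the attacking-mode case of the previous section, adapted to the CR setting. The first step is to verify that each of the fifteen vector fields $X_1,\dots,X_{15}$ is an infinitesimal symmetry of the landing-mode structure, i.e.\ that it satisfies the pair of conditions (\ref{sym}),
\[
\mathcal{L}_{X_i}\omega^0=p_i\,\omega^0,\qquad \mathcal{L}_{X_i}\hat{g}=q_i\,\hat{g}+\mu_i\,\omega^0,
\]
now with the landing metric $\hat{g}$ of (\ref{met}) in place of $g$, for suitable functions $p_i,q_i$ and one-forms $\mu_i$. These are straightforward Lie-derivative computations, most efficiently done with the same Maple Differential Geometry package used before. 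The reason these two conditions are the correct notion of symmetry is that, on $\mathcal{D}$, they force both $\hat{g}$ and $\Omega=\der\omega^0$ to be preserved up to scale, hence force the operator $K=g^{-1}\Omega$ to be preserved up to scale along $X_i$; since $K^2=-\id$ is constant, this scaling factor must vanish, so $\mathcal{L}_{X_i}K=0$ on $\mathcal{D}$ and each $X_i$ genuinely preserves the almost complex structure of the CR structure.

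Next I would check that these fifteen fields are linearly independent over $\bbR$ --- for instance by evaluating them together with enough of their coefficient functions at a generic point of $\mathcal{C}_>$ --- so that they span a fifteen-dimensional Lie algebra $\mathfrak{s}$ of symmetries. Because a hypersurface-type CR structure in dimension five is a parabolic geometry whose flat model has symmetry algebra of dimension $\dim\sua(2,2)=15$, and because $15$ is the maximal possible symmetry dimension for such structures, exhibiting fifteen independent symmetries already proves that the landing-mode CR structure is flat and that $\mathfrak{s}$ is its entire symmetry algebra.

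It then remains to identify $\mathfrak{s}$ as a real Lie algebra. Computing the brackets of $X_1,\dots,X_{15}$ (again by machine, as in the attacking-mode case) one verifies that $\mathfrak{s}$ is simple, produces a Cartan subalgebra of rank three, and checks that its Cartan matrix is of type $A_3$; thus $\mathfrak{s}$ is one of the real forms of $\sla(4,\bbC)$. To decide which one I would compute the Killing form and read off its signature. The five real forms of $A_3$ --- $\sua(4)$, $\sua^*(4)$, $\sua(3,1)$, $\sua(2,2)$ and $\sla(4,\bbR)$ --- have Killing forms of pairwise distinct signature ($-15$, $-5$, $-3$, $+1$, $+3$ respectively, each equal to $\dim\mathfrak{p}-\dim\mathfrak{k}$ in a Cartan decomposition), so the signature determines the real form unambiguously. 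The value $+1$, corresponding to a maximal compact subalgebra $\mathfrak{s}(\mathfrak{u}(2)\oplus\mathfrak{u}(2))$ of dimension seven against eight noncompact directions, singles out $\sua(2,2)$.

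The one genuinely delicate point is this last step. Dimension alone cannot settle the matter, since both $\sla(4,\bbR)$ --- which arose for the attacking mode --- and $\sua(2,2)$ are fifteen-dimensional real forms of $A_3$; it is only the Killing-form signature, $+3$ versus $+1$, that separates them. Thus the crux is the signature computation, and one must be careful that the normalisations of $\omega^0$, $\hat{g}$ and $\Omega$ entering $K$ are precisely those for which $K^2=-\id$, so that the structure is indeed CR with Levi form of signature $(1,1)$, as computed above, rather than the para-CR structure of the attacking mode. Everything else is a mechanical, if lengthy, verification of Lie derivatives and brackets.
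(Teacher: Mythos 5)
Your proposal is correct and takes essentially the same route as the paper: the authors likewise simply solve the infinitesimal symmetry equations (\ref{sym}) by machine, with $\omega^0=\der z-a\der x-b\der y$ and the landing metric, obtain a 15-dimensional solution space with the stated basis, and identify the real form ``in full analogy'' with the attacking case via simplicity, a rank-3 Cartan subalgebra of type $A_3$, and the Killing-form signature (your signature values $-15,-5,-3,+1,+3$ for the five real forms are correct). The only cosmetic difference is that you verify the fifteen given fields are symmetries and then invoke the parabolic-geometry maximality bound $\dim\leq\dim\sua(2,2)=15$ to get completeness and flatness, whereas the paper solves the linear system outright and reads off that the solution space is exactly 15-dimensional; both are sound.
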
 
\begin{proof}
We simply solved the infinitesimal symmetry equations (\ref{sym}) for $X$ with 
$\omega^0=\der z - a \der x-b \der y$ and $g=\Big((1+a^2)\der b-ab\der a\Big)\der x-\Big((1+b^2)\der a-ab\der b)\Big)\der y$. We found a 15-dimensional space of solutions with the basis $(X_1, X_2,\dots, X_{15})$ as in the statement of the proposition.
\end{proof}

\section{$G_2$ aerobatics}
In this section we teach a pilot of a flying saucer new man{\oe}uvres that will equip configuration space $({\mathcal C},{\mathcal D})$ of the saucer with \emph{exceptional} $G_2$ \emph{contact geometry in dimension five}. For this we need some preparations.  
\subsection{Irreducible $\glg(2,\bbR)$ in dimension four}\label{sec:gl2r}
The real vector space $\bbR^4$ can be identified \cite{Br1} with a symmetric third tensorial power of $\bbR^2$:
$$\bbR^4\equiv \bgs^3\bbR^2.$$
The identification map identifies any \emph{4-vector} $X^i=(X^1,X^2,X^3,X^4)\in\bbR^4$ with a \emph{3-spinor} $\Psi^{ABC}=\Psi^{(ABC)}=(\Psi^{111},\Psi^{112},\Psi^{122},\Psi^{222})\in \bgs^3\bbR^2$ via:
$$\bbR^4~~\ni \quad\boxed{ X^i\equiv\Psi^{ABC}}\quad \in~~\bgs^3\bbR^2,$$
so that we have
$$X^1\equiv \Psi^{111},\quad X^2\equiv \Psi^{112},\quad X^3\equiv \Psi^{122},\quad X^4\equiv \Psi^{222}.$$ 
Working with the third symmetric tensorial power of $\bbR^2$ rather than with $\bbR^4$ enables one, by use of a fixed volume form $\epsilon_{AB}=-\epsilon_{BA}$ in $\bbR^2$, to equip $\bbR^4$ with tensors which are $GL(2,\bbR)$ invariant. Indeed, given a vector $X^i\equiv\Psi^{ABC}$ we can associate with it a natural endomorphism $L(X)$ of $\bbR^2$ via:
$$L^A_{~H}(X)=\Psi^{ABC}\Psi^{DEF}\epsilon_{CD}\epsilon_{BE}\epsilon_{FH}.$$
Explicitly the matrix of this endomorphism reads:
$$\Big(L^A{}_H(X)\Big)=\bma X^2X^3-X^1X^4&-2(X^2)^2+2X^1X^3\\2(X^3)^2-2X^2X^4&-X^2X^3+X^1X^4\ema.$$
It defines, through its determinant, a \emph{direction} $\Span_\bbR[\ten]$ of \emph{a symmetric rank 4 tensor} $\ten_{ijkl}=\ten_{(ijkl)}$. This is given by:
\be\begin{aligned}
  \ten(X,X,X,X):=&\,{\rm det}\Big(L(X)\Big)\\
  =&\,3(X^2)^2(X^3)^2-4X^1(X^3)^3-4(X^2)^3X^4+6X^1X^2X^3X^4-(X^1)^2(X^4)^2\\=&\,\tfrac{1}{24}\ten_{ijkl}X^iX^jX^kX^l.\end{aligned}\label{ten4}\ee
There is a natural $\glg(2,\bbR)$ action $\tilde{\rho}$ on tensors $T_{i_1i_2\dots i_r}$ from $\bgt^r\bbR^4$. This is induced by the irreducible representation of $\glg(2,\bbR)$ in $\bbR^4$ given by:
$$\rho^i{}_j(\alpha)X^j:=\alpha^A{}_{A_1}\alpha^B{}_{B_1}\alpha^C{}_{C_1}\Psi^{(A_1B_1C_1)},\quad \alpha=(\alpha^A{}_B)\in\glg(2,\bbR).$$
Explicitly, the linear action $\tilde{\rho}$ is given in terms of the representation $\rho$ via:
$$\tilde{\rho}{}^{j_1j_2\dots j_r}{}_{i_1i_2\dots i_r}(\alpha)T_{j_1j_2\dots j_r}:=T_{j_1j_2\dots j_r}\rho(\alpha^{-1}){}^{j_1}{}_{i_1}\rho(\alpha^{-1}){}^{j_2}{}_{i_2}\dots \rho(\alpha^{-1}){}^{j_r}{}_{i_r}.$$
It follows from the construction that under this $\glg(2,\bbR)$ action, tensor $\ten_{ijkl}$ transforms up to a scale, $$\tilde{\rho}{}^{i_1j_1k_1l_1}{}_{ijkl}(\alpha)\ten_{i_1j_1k_1l_1}=f(\alpha)\ten_{ijkl},$$ i.e. the direction of this tensor $\Span_\bbR[\ten_{ijkl}]$ is $\glg(2,\bbR)$ invariant.

Although the matrix entries $L^A{}_H(X)$ of the endomorphism $L(X)$ are \emph{not} invariant with respect to $\glg(2,\bbR)$ action, it follows that the span  $\Span_\bbR[g^1,g^2,g^3]$, of the three symmetric bilinear forms $g^1{}_{ij},g^2{}_{ij},g^3{}_{ij}$ in $\bbR^4$ defined by these entries via:
$$\begin{aligned}
  g^1(X,X):=&\,X^1X^3-(X^2)^2&=\tfrac12g^1{}_{ij}X^iX^j\\
  g^2(X,X):=&\,(X^3)^2-X^2X^4&=\tfrac12g^2{}_{ij}X^iX^j\\
  g^3(X,X):=&\,X^1X^4-X^2X^3&=\tfrac12g^3{}_{ij}X^iX^j,
  \end{aligned}$$    
forms a $\glg(2,\bbR)$ invariant module. 

In addition, we also have a $\glg(2,\bbR)$\emph{-invariant direction} $\Span_\bbR[\omega]$ of \emph{a 2-form}. With the identifications $X^i=\Psi^{ABC}$ and $Y^i=\Phi^{ABC}$ this 2-form is given by:
\be\begin{aligned}
  \omega(X,Y):=&\,\Psi^{ABC}\Phi^{DEF}\epsilon_{CD}\epsilon_{BE}\epsilon_{AF}\\=&\,X^1Y^4-X^4Y^1-3X^2Y^3+3X^3Y^2\\=&\,\tfrac12\omega_{ij}X^iY^j.\end{aligned}\label{2form}\ee
In particular, $\omega(X,X)=L^A{}_{A}(X)=0$ and $$\omega\dz\omega\neq 0.$$

In the following we call the tensor $\ten$ defined in (\ref{ten4}) and the 2-form $\omega$ defined in (\ref{2form}) the \emph{structural tensors}. 

Given the structural tensor $\ten$ consider its (conformal) stabilizer $$S=\{A\in\glg(4,\bbR)\,\,{\rm s.t.}\,\,\ten(AX,AX,AX,AX)=f(A)\ten(X,X,X,X)\}$$ in $\glg(4,\bbR)$. We have the following proposition \cite{nurgl2}:
    \begin{proposition}
      The stabilizer $S$ of the structural tensor $\ten$ is $\glg(2,\bbR)$ in 4-dimensional irreducible representation. Moreover,
      $$S=\glg(2,\bbR)\subset \cspg(\omega)\subset\glg(4,\bbR),$$
i.e. $\ten$ reduces the structure group $\glg(4,\bbR)$ to the irreducible $\glg(2,\bbR)$ via the conformal symplectic group $$\cspg(\omega)=\{A\in\glg(4,\bbR)\,\,{\rm s.t.}\,\,\omega(AX,AY)=h(A)\omega(X,Y)\}$$ preserving the structural 2-form $\omega$.  
\end{proposition}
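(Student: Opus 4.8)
The plan is to prove $S=\glg(2,\bbR)$ by a two-sided inclusion and then to read off the chain $S=\glg(2,\bbR)\subset\cspg(\omega)\subset\glg(4,\bbR)$ almost for free. The inclusion $\glg(2,\bbR)\subseteq S$ is the easy half and is built into the construction: since $\ten$ is assembled from $\Psi^{ABC}$ by $\epsilon$-contractions alone, and each $\epsilon_{AB}$ scales by $\det\alpha$ under $\alpha\in\glg(2,\bbR)$, a bookkeeping of the six $\epsilon$'s occurring in $\det L(X)$ gives $\ten(\rho(\alpha)X,\rho(\alpha)X,\rho(\alpha)X,\rho(\alpha)X)=(\det\alpha)^{6}\,\ten(X,X,X,X)$. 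Hence every element $\rho(\alpha)$ of the $4$-dimensional irreducible representation of $\glg(2,\bbR)$ lies in the conformal stabilizer $S$, and all the content is in the reverse inclusion $S\subseteq\glg(2,\bbR)$.

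For the reverse inclusion I would first argue infinitesimally. Writing $A=\exp(tB)$ and differentiating the defining relation $\ten(AX,AX,AX,AX)=f(A)\,\ten(X,X,X,X)$ at $t=0$ yields, for $B$ in the Lie algebra $\mathfrak s$ of $S$, the polynomial identity
\[
\ten(BX,X,X,X)+\ten(X,BX,X,X)+\ten(X,X,BX,X)+\ten(X,X,X,BX)=\mu(B)\,\ten(X,X,X,X),
\]
valid for all $X\in\bbR^{4}$, with $\mu(B)$ depending linearly on $B$. Expanding the explicit quartic \nn{ten4} and matching coefficients of every degree-four monomial in $(X^{1},X^{2},X^{3},X^{4})$ turns this into a finite linear system for the sixteen entries $B^{i}{}_{j}$ and the auxiliary scalar $\mu(B)$. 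I would solve this system; the expectation is a four-parameter family of solutions. Since $\gla(2,\bbR)$ already sits inside $\mathfrak s$ and has dimension four, a four-dimensional solution space forces $\mathfrak s=\gla(2,\bbR)$, whence $S^{\circ}=\glg(2,\bbR)^{\circ}$; it then remains to rule out extra connected components, which I defer to the geometric argument below.

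A cleaner, coordinate-free route to the reverse inclusion — and a good cross-check — comes from recognising $\ten$ as a discriminant. Comparing \nn{ten4} with the discriminant of the binary cubic $X^{1}s^{3}+3X^{2}s^{2}t+3X^{3}st^{2}+X^{4}t^{3}$ identifies $\ten(X,X,X,X)$ with \emph{minus} that discriminant, which makes the $\glg(2,\bbR)$-equivariance transparent. Any $A\in S$ rescales $\ten$, hence preserves its projective zero locus $Z=\{\ten=0\}\subset\bbP^{3}$, the hypersurface of cubics with a repeated root, and therefore also preserves $\mathrm{Sing}(Z)$, which is the twisted cubic $C=\{[s^{3}:s^{2}t:st^{2}:t^{3}]\}$ of triple-root cubics. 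Because $C$ spans $\bbP^{3}$, any projective automorphism preserving it is uniquely determined by its restriction to $C\cong\bbP^{1}$, so $\mathrm{Aut}(\bbP^{3},C)=\mathbf{PGL}(2,\bbR)$; thus $A$ descends to an element of $\mathbf{PGL}(2,\bbR)$ and, modulo scalars, lies in the image of $\glg(2,\bbR)$. Since the scalar matrices $\mu\,\mathrm{Id}_{4}$ are themselves images of $\lambda\,\mathrm{Id}_{2}$ under $\lambda\mapsto\lambda^{3}$, this pins $A$ down to $\glg(2,\bbR)$ exactly, simultaneously disposing of the component-group issue left open above.

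Finally the symplectic containment is formal: it was shown above that $\omega$ spans a $\glg(2,\bbR)$-invariant line, i.e.\ $\glg(2,\bbR)\subseteq\cspg(\omega)$, so combining this with $S=\glg(2,\bbR)$ gives the full chain $S=\glg(2,\bbR)\subset\cspg(\omega)\subset\glg(4,\bbR)$. I expect the main obstacle to be the reverse inclusion $S\subseteq\glg(2,\bbR)$: in the infinitesimal approach the real work is the honest verification that the coefficient equations cut the space of admissible $B$ down to exactly four dimensions and no more, while in the geometric approach the delicate point is justifying over $\bbR$ that the only projective symmetries of the twisted cubic come from $\mathbf{PGL}(2,\bbR)$ and lifting this correctly to $\glg(4,\bbR)$.
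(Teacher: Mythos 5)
Your proposal is correct, and it is worth noting at the outset that it supplies a proof where the paper gives none: the paper states this proposition with a bare citation to \cite{nurgl2} and offers no argument in the text, so there is no ``paper proof'' to match — your two-sided inclusion is a self-contained substitute for the reference. The easy inclusion is right, and your exponent bookkeeping checks out: from $L^A{}_H(X)=\Psi^{ABC}\Psi^{DEF}\epsilon_{CD}\epsilon_{BE}\epsilon_{FH}$ one gets $L(\rho(\alpha)X)=(\det\alpha)^3\,\alpha\,L(X)\,\alpha^{-1}$, whence $\ten(\rho(\alpha)X,\ldots)=(\det\alpha)^6\,\ten(X,\ldots)$; consistency check on scalars: $\rho(\lambda\,\mathrm{Id}_2)=\lambda^3\,\mathrm{Id}_4$ gives $\lambda^{12}=(\det(\lambda\,\mathrm{Id}_2))^6$. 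Of your two routes for the hard inclusion, the geometric one is the one to keep, since the infinitesimal route is only an ``expectation'' until the linear system is actually solved, and it leaves the component group open anyway. Four small repairs to the geometric route. First, the sign: with the convention $\Delta=18abcd-4b^3d+b^2c^2-4ac^3-27a^2d^2$ for the binary cubic $as^3+bs^2t+cst^2+dt^3$, substituting $(a,b,c,d)=(X^1,3X^2,3X^3,X^4)$ gives $\ten=\tfrac{1}{27}\Delta$, i.e.\ \emph{plus} the discriminant, not minus — immaterial for a conformal stabilizer, but fix the statement. Second, note that $f(A)\neq0$ is automatic (if $f(A)=0$ then $\ten\equiv0$ by invertibility of $A$), so $A$ genuinely preserves the zero locus $Z$, hence $\mathrm{Sing}(Z)=C$. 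Third, ``$C$ spans $\bbP^3$'' is not quite enough to conclude that a projectivity is determined by its restriction to $C$: you need that any five points of the twisted cubic are in general position (no four coplanar, since a plane meets $C$ in at most three points); with that, two projectivities agreeing on all of $C$ coincide. Fourth, the real-form point you flag is most cleanly handled by complexifying: a real $A\in S$ preserves the complex zero locus in $\bbC\bbP^3$, hence the complex twisted cubic, and the induced automorphism of $C_\bbC\cong\bbC\bbP^1$ is a M\"obius transformation commuting with complex conjugation, hence lies in $\mathbf{PGL}(2,\bbR)$; surjectivity of $\lambda\mapsto\lambda^3$ on $\bbR^\times$ then lifts $A$ into $\rho(\glg(2,\bbR))$ exactly as you say. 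With these patches your argument is complete, and the final containment $S=\glg(2,\bbR)\subset\cspg(\omega)$ is indeed formal once the invariance of the line $\Span_\bbR[\omega]$ is in hand.
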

    \subsection{$\glg(2,\bbR)$ null vectors in dimension four}
    It is interesting to consider those nonzero vectors $X$ in $\bbR^4$ which are \emph{null} with respect to the structural tensor $\ten$. It is also interesting to consider those vectors $0\neq X\in\bbR^4$ which are simultaneously null with respect to all three bilinear forms $g^1$, $g^2$ and $g^3$, which define the $\glg(2,\bbR)$ invariant module  $\Span_\bbR[g^1,g^2,g^3]$. Let us denote by
    $$\mathcal T=\{{\rm dir}(X)\in \bbR^4\,\, {\rm s.t.}\,\,g^1(X,X)=g^2(X,X)=g^3(X,X)=0\}$$
the set of all directions in $\bbR^4$ that are null with respect to all three bilinear forms $g^1$, $g^2$ and $g^3$, and
    by
    $$TV({\mathcal T})=\{{\rm dir}(X)\in \bbR^4\,\, {\rm s.t.}\,\,\ten(X,X,X,X)=0\}$$
    the set  of all directions in $\bbR^4$ that are null with respect to the structural tensor $\ten$.

By elementary arguments we see that the dimension of the set $\mathcal T$ is equal to one, $\dim({\mathcal T})=1$, and that the dimension of the set $TV({\mathcal T})$ is equal to two, $\dim(TV({\mathcal T}))=2$. 
    
    To find a description of open sets in $\mathcal T$ and $TV({\mathcal T})$  we consider elements of $\bgs^3\bbR^2$ of the form $\Psi^{ABC}=\xi^{(A}\eta^B\tau^{C)}$, where $\xi,\eta,\tau\in\bbR^2$. We call $\xi,\eta$ and $\tau$ \emph{principal spinors} for $\Psi$. We may have $\Psi$'s with all three principal spinor \emph{directions} coinciding: $\Psi^{ABC}=v \xi^A\xi^B\xi^C$ - such $\Psi$s we call \emph{of type} $N$, with two principal spinor \emph{directions} coinciding: $\Psi^{ABC}=\xi^{(A}\xi^B\tau^{C)}$ - such $\Psi$s we call \emph{of type II}, or we have \emph{algebraically general} $\Psi$s, where all the principal spinor directions ${\rm dir}(\xi)$, ${\rm dir}(\eta)$ and ${\rm dir}(\tau)$ are distinct. 

    Let us consider vectors $X\in \bbR^4$ corresponding to $\Psi\in\bgs^3\bbR^2$ of type $N$. We thus have ${\rm dir}(X^i)={\rm dir}(\xi^A\xi^B\xi^C)$ for some $\xi\in\bbR^2$. Explicitly we can represent such a direction by a vector:
    $$(X^1,X^2,X^3,X^4)=\Big((\xi^1){}^3,(\xi^1){}^2\xi^2,\xi^1(\xi^2){}^2,(\xi^2)^3\Big).$$
    Because $\Psi$ in $\bgs^3\bbR^2$ representing this $X$ is $\Psi^{ABC}=\xi^A\xi^B\xi^C$, we have
    $$L^A{}_H(X)=\Psi^{ABC}\Psi^{DEF}\epsilon_{CD}\epsilon_{BE}\epsilon_{FH}=\xi^A\xi^B\xi^C\xi^D\xi^E\xi^F\epsilon_{CD}\epsilon_{BE}\epsilon_{FH}=0,$$
    due to the antisymmetry of $\epsilon$. Thus, $$g^1(X,X)=g^2(X,X)=g^3(X,X)=0,$$
    as well as $$\ten(X,X,X,X)=\det(L(X))=\det(0)=0.$$
    This proves that every vector $X\in\bbR^4$ corresponding to $\Psi\in\bgs^3\bbR^2$ of type $N$ is null with respect to the module $\Span_\bbR[g^1,g^2,g^3]$ and is also null with respect to the structural tensor $\ten$. Parametrizing $\xi$ as
    $$(\xi^1,\xi^2)=(u,ut),$$
    we obtain
$$(X^1,X^2,X^3,X^4)=\Big(u^3,u^3t,u^3t^2,u^3t^3),$$    
    i.e. that every direction of a null vector $X$ corresponding to the spinor $\Psi\in\bgs^3\bbR^2$ of type $N$ is given by
     \be {\rm dir}(X)=(1,t, t^2, t^3)^T,\label{tc}\ee
i.e. it lies on the \emph{twisted cubic}
    $$\nu(t)=(1,t,t^2,t^3)^T.$$
If we now consider $X\in\bbR^4$ corresponding to $\Psi\in\bgs^3\bbR^2$ of type $II$, i.e. of the form $\Psi^{ABC}=\xi^{(A}\xi^B\tau^{C)}$, we will find that the matrix of the endomorphism $L(X)$ reads    
$$\Big(L^A{}_H(X)\Big)=\tfrac29 (\xi^1\tau^2-\xi^2\tau^1)^2\bma\xi^1\xi^2&-(\xi^1){}^2\\(\xi^2){}^2&-\xi^1\xi^2\ema.$$
Although this matrix has nonzero entries, which means that the vector $X$ corresponding to $\Psi^{ABC}=\xi^{(A}\xi^B\tau^{C)}$ is \emph{not} null in either of the metrics $g^1$, $g^2$ or $g^3$, its determinant $\det(L(X))$ clearly vanishes. Thus
$$\ten(X,X,X,X)=\det(L(X))=0.$$
This shows that directions ${\rm dir}(X)$ corresponding to vectors $X$  having its spinorial image $\Psi$ of type $II$, are null with respect to the structural tensor $\ten$.

For vectors $X^i$ corresponding to $\Psi^{ABC}=\xi^{(A}\xi^B\eta^{C)}$ we have:
$$(X^1,X^2,X^3,X^4)=\Big((\xi^1)^2\eta^1,\tfrac13\xi^1(\xi^1\eta^2+2\xi^2\eta^1),\tfrac13\xi^2(2\xi^1\eta^2+\xi^2\eta^1),(\xi^2)^2\eta^2\Big).$$
Parametrizing $\xi$ and $\eta$ as:
$$(\xi^1,\xi^2)=(u,ut),\quad{\rm and}\quad(\eta^1,\eta^2)=(v,v(t+3s)),$$
we get
$$(X^1,X^2,X^3,X^4)=\Big(u^2v,u^2v(t+s),u^2v(t^2+2st),u^2v(t^3+3st^2)\Big).$$
This means that
\be {\rm dir}(X)=(1,t,t^2,t^3)^T+s(0,1,2t,3t^2)^T=\nu(t)+s\dot{\nu}{}(t),\label{g2man}\ee
i.e. that the directions of vectors $X$ corresponding to spinors $\Psi\in\bgs^3\bbR^2$ of type $II$ lie on the \emph{tangent variety to the twisted cubic} $\nu(t)$. 

In this way we see that the set of all null directions for the structural tensor $\ten$ \emph{stratifies} into directions lying on the twisted cubic $\nu$ (these are directions corresponding to $\Psi$s of type $N$, which are in addition null directions for the module $\Span_\bbR(g^1,g^2,g^3)$), and into more general directions lying strictly on the tangent variety to the twisted cubic (these are directions corresponding to $\Psi$s of strict type $II$; they are null with respect to $\ten$, and are \emph{not} null with respect to any of $g^1$, $g^2$ and $g^3$).

We have the following proposition.
    \begin{proposition}~\\
1) The set $\mathcal T$ of all directions in $\bbR^4$ that are null with respect to all three bilinear forms $g^1$, $g^2$ and $g^3$ is a twisted cubic. \\
2) The set $TV({\mathcal T})$ of all directions in $\bbR^4$ that are null with respect to the structural tensor $\ten$ is the tangent variety to the twisted cubic $\mathcal T$. 
      \end{proposition}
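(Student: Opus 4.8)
The plan is to exploit the spinorial identification $X^i\equiv\Psi^{ABC}$ already in play and reduce both claims to classical invariant theory of the binary cubic $\Psi$, whose three principal spinor directions are exactly its roots. Note that the forward inclusions are already in hand: the computation leading to \eqref{tc} shows that every type~$N$ direction lies on the twisted cubic $\nu(t)=(1,t,t^2,t^3)^T$ and is null for all of $g^1,g^2,g^3$ (hence for $\ten$), while the computation leading to \eqref{g2man} shows that every type~$II$ direction lies on the tangent variety $\nu(t)+s\dot\nu(t)$ and is null for $\ten$. What remains is to establish the reverse inclusions, i.e.\ that these parametrizations account for \emph{all} null directions.

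For part (1), I would first read off from the explicit matrix of $L(X)$ in \S\ref{sec:gl2r} that its entries are, up to constants, precisely the three forms $g^1,g^2,g^3$: indeed $L^1{}_2=2g^1$, $L^2{}_1=2g^2$, and $L^1{}_1=-L^2{}_2=-g^3$, so that $\mathcal T=\{\,L(X)=0\,\}$. Next I would observe that $L(X)$ is a constant multiple of the Hessian covariant of the binary cubic $\Psi$ (a binary quadratic, whose coefficient matrix is $L$). By the classical fact that the Hessian of a binary form vanishes identically exactly when the form is a perfect power of a single linear form, $L(X)=0$ holds precisely for the type~$N$ spinors $\Psi^{ABC}=v\,\xi^A\xi^B\xi^C$; combined with \eqref{tc} this identifies $\mathcal T$ with the twisted cubic. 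Equivalently, and perhaps more transparently, one recognizes $g^1,-g^2,g^3$ as the three $2\times 2$ minors of the catalecticant $\bma X^1&X^2&X^3\\X^2&X^3&X^4\ema$, so that $\mathcal T$ is its rank $\le 1$ locus, which is classically exactly the (projective) twisted cubic.

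For part (2), I would use the identity $\ten(X,X,X,X)=\det L(X)$ from \eqref{ten4} and match the explicit quartic displayed there with the discriminant of the binary cubic $\Psi$: up to a nonzero constant they coincide. Hence $\ten(X,X,X,X)=0$ holds precisely when $\Psi$ has a repeated root, that is, when $\Psi$ is of type~$N$ or type~$II$ but not algebraically general. By \eqref{tc} and \eqref{g2man} these are exactly the directions lying on $\nu$ and on its tangent variety, so $TV(\mathcal T)$ is exactly the tangent variety to $\mathcal T$. To pass from set-theoretic containment to equality of varieties, I would note that $\{\ten=0\}$ is a single quartic hypersurface and that the tangent developable of the twisted cubic is an irreducible quartic surface; since the stated dimension count gives $\dim TV(\mathcal T)=2$, matching degrees and irreducibility force the two surfaces to agree.

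The main obstacle is precisely these reverse inclusions — proving that the type~$N$ and type~$II$ parametrizations \eqref{tc} and \eqref{g2man} \emph{exhaust} all null directions rather than merely producing some of them. Everything hinges on the two covariant-theoretic identifications, namely that $L(X)$ is the Hessian of $\Psi$ (so its vanishing characterizes a perfect cube) and that $\det L(X)$ is the discriminant of $\Psi$ (so its vanishing characterizes a repeated root). Once these identifications are confirmed by direct comparison with the explicit formulae for $L$ and for $\ten$ in \S\ref{sec:gl2r}, the remaining dimension and irreducibility bookkeeping needed to upgrade containment to equality is routine.
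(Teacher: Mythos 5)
Your proof is correct, and on the crucial step it takes a genuinely different route from the paper. The forward inclusions are handled identically in both: you and the paper obtain them from the type~$N$ and type~$II$ computations culminating in the parametrizations (\ref{tc}) and (\ref{g2man}). Where you part ways is the reverse inclusions. The paper treats these rather informally: it asserts ``by elementary arguments'' that $\dim\mathcal T=1$ and $\dim TV(\mathcal T)=2$, and then explicitly says it only describes \emph{open sets} of the two null loci via the spinor-type strata, so exhaustiveness rests on a dimension count combined with the stratification of $\bgs^3\bbR^2$ by root type. You instead invoke classical invariant theory of the binary cubic: identifying $L(X)$ with the Hessian covariant of $\Psi$ (equivalently, reading $g^1$, $-g^2$, $g^3$ as the $2\times2$ minors of the catalecticant $\bigl(\begin{smallmatrix}X^1&X^2&X^3\\X^2&X^3&X^4\end{smallmatrix}\bigr)$, whose rank-$\leq 1$ Hankel locus is classically the twisted cubic), so that $\mathcal T=\{L(X)=0\}$ is exactly the perfect-cube locus; and identifying $\ten(X,X,X,X)=\det L(X)$ with the discriminant of $\Psi$ up to a nonzero constant, so that nullity for $\ten$ is exactly the repeated-root condition, i.e.\ type $N$ or type $II$. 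Your identifications check out against the explicit formulae in \S\ref{sec:gl2r} (indeed $L^1{}_2=2g^1$, $L^2{}_1=2g^2$, $L^1{}_1=-L^2{}_2=-g^3$). What your route buys is a complete, characterization-based proof that the parametrized strata exhaust the null loci, precisely the point the paper leaves implicit; what the paper's route buys is the explicit parametrizations it needs downstream for the man{\oe}uvre descriptions (\ref{cong25})--(\ref{cong26}). Two small points to tighten: since everything here is over $\bbR$, you should note that a repeated root of a real binary cubic is automatically real (a nonreal repeated root would bring its conjugate along, exceeding degree three), so the repeated-root locus really is covered by the real type $N$/$II$ parametrizations used in (\ref{tc}) and (\ref{g2man}); and once the discriminant identification is in place it already yields set-theoretic equality directly, so your closing degree-and-irreducibility comparison of the two quartic surfaces is redundant --- and would in any case require extra care over $\bbR$, where equality of real point sets does not follow from degree bookkeeping alone.
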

    \subsection{$G_2$ mode flying rule}\label{g2section}
    We are now in a position to teach a pilot of a flying saucer to perform a $G_2$ \emph{mode aerobatics man{\oe}uvre}. For this we need to equip flying saucer's configuration space with a $G_2$ \emph{contact structure}. This will be done by equipping a contact distribution of a flying saucer with the irreducible $\glg(2,\bbR)$ structure in dimension four compatible with the symplectic form on the distribution. 

Any flying saucer is equipped with a \emph{contact structure} with a contact form $\omega^0$. In coordinates, we have
    \be \omega^0=\der z-a\der x-b\der y,\label{cong21}\ee
    so that the symplectic form is 
    $$\der\omega^0=\der x\dz\der a+\der y\dz\der b.$$
    Comparing this with (\ref{2form}), we introduce 1-forms
    \be \omega^1=\der x,\quad\omega^2=\der y,\quad\omega^3=-\tfrac13\der b,\quad\omega^4=\der a,\label{cong22}\ee
    to achieve
    $$\der\omega^0=\omega^1\dz\omega^4-3\omega^2\dz\omega^3.$$
    The contact distribution $\mathcal D$ is spanned by the duals:
    \be Z_1=\partial_x+a\partial_z,\quad Z_2=\partial_y+b\partial_z,\quad Z_3=-3\partial_b,\quad Z_4=\partial_a.\label{cong23}\ee
    We use forms $(\omega^1,\omega^2,\omega^3,\omega^4)$ to define a module of symmetric bilinear forms $[g^1,g^2,g^3]$ as well as the structural tensor $\ten$ on the contact distribution by:
    $$g^1=\omega^1\omega^3-(\omega^2)^2,\quad g^2=(\omega^3)^2-\omega^2\omega^4,\quad g^3=\omega^1\omega^4-\omega^2\omega^3,$$
    and
    \be\ten=3(\omega^2)^2(\omega^3)^2-4\omega^1(\omega^3)^3-4(\omega^2)^3\omega^4+6\omega^1\omega^2\omega^3\omega^4-(\omega^1)^2(\omega^4)^2.\label{cong24}\ee
    In the above formulas, the multiplication of 1-forms $\omega^i$ means the \emph{symmetric tensor product}, e.g. $3\omega^2\omega^3=\tfrac32(\omega^2\otimes\omega^3+\omega^3\otimes\omega^2)$.
    According to the discussion in Section \ref{sec:gl2r} the tensor $\ten$ defined above reduces the structure group of the contact distribution $\mathcal D$ from $\glg(4,\bbR)$ to the irreducible $\glg(2,\bbR)$. Moreover, this reduction is done through $\cspg(\omega)$, with the symplectic form $\omega$ agreeing with the symplectic form $\der\omega^0$ coming from the contact form $\omega^0$ of the flying saucer. 

    A $G_2$ \emph{mode man{\oe}uvre} is a movement of the flying saucer such that at every moment of time its full velocity $\dot{\gamma}$ in the configuration space $\mathcal C$ is along a (tangent to the contact distribution) null direction for the above defined structural tensor $\ten$. There are two categories of this man{\oe}uvre: an \emph{easier} one, when $\dot{\gamma}$ lies on the tangent variety of the twisted cubic, and a \emph{more advanced} one, when $\dot{\gamma}$ lies on the twisted cubic. In our setting, the easier man{\oe}uvre means that saucer's velocity $\dot{\gamma}{}(t)$ at time $t$ must be \emph{parallel} to (compare with (\ref{g2man})):
    \be Z_1+\Big(T(t)+S(t)\Big)Z_2+\Big(T^2(t)+2S(t)T(t)\Big)Z_3+\Big(T^3(t)+3S(t)T^2(t)\Big)Z_4,\label{cong25}\ee
     where $T(t)$ and $S(t)$ are arbitrary functions of time (controls) that a pilot can adjust as he wishes. The more advanced man{\oe}uvre means that saucer's velocity $\dot{\gamma}{}(t)$ must be \emph{parallel} to 
     \be Z_1+T(t)Z_2+T^2(t)Z_3+T^3(t)Z_4,\label{cong26}\ee
     i.e. it must be such that it is null with respect to all three bilinear forms $g^1$, $g^2$ and $g^3$, in addition of being null with respect to $\ten$.
     \subsection{$G_2$ contact structures in dimension 5}
     We have to comment on the name of the man{\oe}uvre defined in the previous section. Why it is a $G_2$ man{\oe}uvre?

     For this we need a general definition of a $G_2$ contact structure. Here it is:
     \begin{definition} Let $M$ be a 5-dimensional contact manifold $M$, with a direction of a contact form $\omega^0$ and a contact distribution ${\mathcal D}=(\omega^0)^\perp$. A $G_2$ contact structure on $M$ is the reduction of the structure group of $\mathcal D$ from $\glg(4,\bbR)$ to the irreducible $\glg(2,\bbR)\subset\cspg(\der\omega^0)$.  
     \end{definition}
     This definition means that the contact manifold $M$ is a $G_2$ contact manifold if and only if, equivalently, it is equipped with a contact form $\omega^0$ and a symmetric forth rank tensor $\ten$, both given up to scale, on the contact distribution $\mathcal D$, and such that the common stabilizer of $\ten$ and $\der\omega^0$ in $\glg(4,\bbR)$,
     $$\begin{aligned}
       S_{\{\ten,\der\omega^0\}}=&\{A\in\glg(4,\bbR)\,\,{\rm s.t.}\\&\ten(AX,AX,AX,AX)=f(A)\ten(X,X,X,X),\\&\quad\quad\quad\quad\quad\quad\quad\quad\&\\&\der\omega^0(AX,AY)=h(A)\der\omega^0(X,Y),\,\, X,Y\in{\mathcal D}\},\end{aligned}$$
 is the irreducible $\glg(2,\bbR)$,
 $$S_{\{\ten,\der\omega^0\}}={\rm irreducible}\big(\glg(2,\bbR)\big)\subset\cspg(\der\omega^0)\subset\glg(4,\bbR).$$

 We say that two $G_2$ contact structures, represented by respective pairs $(\omega^0,\ten)$ and $(\bar{\omega}{}^0,\bar{\ten})$ on respective manifolds $M$ and $\bar{M}$  are locally equivalent, iff there exists a local diffeomorphism
 $\phi: M\to\bar{M}$ such that
 $$\phi^*(\bar{\omega}{}^0)=f_1\omega^0,\quad{\rm and}\quad\phi^*(\bar{\ten})=f_2\ten\quad{\rm mod}\,\,\omega^0,$$
 with some nonvanishing functions $f_1$ and $f_2$ on $M$. Local self-equivalences are called \emph{symmetries}. They form a \emph{group of local symmetries} of a $G_2$ contact structure. \emph{Infinitesimal symmetries} are vector fields $X$ on $M$ such that
 \be
 \big( {\mathcal L}_X\omega^0 \big)\dz\omega^0=0,\quad{\rm and}\quad {\mathcal L}_X\ten=h\ten+\omega^0\chi,\label{sychi}
 \ee
 where $h$ is a function on $M$, and $\chi$ is a symmetric rank 3 tensor on $M$. Infinitesimal symmetries form a Lie algebra of symmetries of a $G_2$ contact structure.

 It follows that $G_2$ contact structures in dimension 5 may have no infinitesimal symmetries at all, and that there is a precisely one (modulo local equivalence) $G_2$ contact structure with a maximal Lie algebra of symmetries. The name of a $G_2$ contact structure reflects the fact that the maximal Lie algebra of symmetries is isomorphic to the (split real form of the) exceptional simple Lie algebra $\mathfrak{g}_2$ of dimension 14.

 We can now state a theorem about the $G_2$ contact structure defining the $G_2$ mode man{\oe}uvres as described by (\ref{cong25}) and (\ref{cong26}). This structure has the contact form $\omega^0$ as in (\ref{cong21}) and the structural tensor $\ten$ as in (\ref{cong24}). We have the following theorem.

 \begin{proposition}
   The symmetry algebra of the $G_2$ contact structure on ${\mathcal C}_>0$ with the contact form $\omega^0=\der z - a \der x-b \der y$ and the structural tensor $\ten$ given by (\ref{cong24}) and (\ref{cong22}) is the split real form of the simple exceptional Lie algebra $\mathfrak{g}_2$. It is spanned by the following 14 vector fields on ${\mathcal C}_>$:
$$\begin{aligned}
X_1&=(y^3+xz)\partial_x+(yz-\tfrac19b^2x-\tfrac23by^2)\partial_y+(z^2-\tfrac{2}{27}b^3x-\tfrac13b^2y^2)\partial_z+\\&(az-a^2x-aby+\tfrac{1}{27}b^3)\partial_a+(bz-abx-3ay^2-\tfrac13b^2y)\partial_b\\
X_2&=x^2\partial_x+xy\partial_y+(xz-y^3)\partial_z+(z-ax-by)\partial_a-3y^2\partial_b\\
X_3&=-\tfrac12z\partial_x+\tfrac{1}{18}b^2\partial_y+\tfrac{1}{27}b^3\partial_z+\tfrac12a^2\partial_a+\tfrac12ab\partial_b\\
X_4&=-3y^2\partial_x+(\tfrac43by-z)\partial_y+\tfrac23b^2y\partial_z+ab\partial_a+(6ay+\tfrac13b^2)\partial_b\\
X_5&=\tfrac13y\partial_y+z\partial_z+a\partial_a+\tfrac23b\partial_b\\
X_6&=\tfrac92xy\partial_x+(\tfrac32y^2-bx)\partial_y+\tfrac12(9yz-b^2x)\partial_z+\tfrac12b^2\partial_a+\tfrac12(9z+3by-9ax)\partial_b\\
X_7&=-x\partial_y+3y^2\partial_z+b\partial_a+6y\partial_b\\
X_8&=x\partial_x+\tfrac23y\partial_y+z\partial_z+\tfrac13b\partial_b\\
X_{9}&=y\partial_x-\tfrac29b\partial_y-\tfrac19b^2\partial_z-a\partial_b\\
X_{10}&=x\partial_z+\partial_a\\
X_{11}&=y\partial_z+\partial_b\\
X_{12}&=\partial_x\\
X_{13}&=\partial_y\\
X_{14}&=\partial_z.
\end{aligned}
$$ 
\end{proposition}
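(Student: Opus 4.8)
The plan is to proceed exactly as in the attacking- and landing-mode cases: integrate the infinitesimal symmetry equations (\ref{sychi}) in the coordinates $(x,y,z,a,b)$ on ${\mathcal C}_>$, and then identify the resulting Lie algebra. Writing a general vector field $X=A_1\partial_x+A_2\partial_y+A_3\partial_z+A_4\partial_a+A_5\partial_b$ with coefficients $A_\mu=A_\mu(x,y,z,a,b)$, the first equation $({\mathcal L}_X\omega^0)\dz\omega^0=0$ expresses that $X$ is an infinitesimal contact transformation of $\omega^0=\der z-a\der x-b\der y$; being linear and first order, it is handled most efficiently through a contact generating function, which reduces the five unknown coefficients $A_\mu$ to a single function. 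The second equation ${\mathcal L}_X\ten=h\ten+\omega^0\chi$, with $\ten$ the quartic (\ref{cong24}) assembled from the coframe (\ref{cong22}), is then imposed on the distribution ${\mathcal D}$: reducing modulo $\omega^0$ annihilates the $\omega^0\chi$ term, and matching the remaining components of the symmetric rank-$4$ tensor yields an overdetermined linear system for the generating function together with the conformal factor $h$.

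Solving this overdetermined system --- most cleanly with Anderson's Maple package, as in the preceding propositions --- should return a solution space of dimension exactly $14$, with basis $(X_1,\dots,X_{14})$ as listed. The decisive content of this step is the dimension count: one must verify that no solutions exist beyond the fourteen exhibited. Since the construction of Section \ref{g2section} already guarantees that $({\mathcal C}_>,\omega^0,\ten)$ is a genuine $G_2$ contact structure, and since $14$ is the \emph{maximal} possible dimension of the symmetry algebra of such a structure, attained (modulo local equivalence) only by the flat model, the equality $\dim=14$ forces our structure to be flat, and its symmetry algebra is therefore the split real form of $\mathfrak{g}_2$.

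For a self-contained confirmation of the isomorphism type, mirroring the $\sla(4,\bbR)$ computation, I would compute the brackets $[X_\mu,X_\nu]$. Closure is automatic, since infinitesimal symmetries always form a Lie algebra, but the structure constants let one check directly that the algebra is simple. Because $\mathfrak{g}_2$ is the only complex simple Lie algebra of dimension $14$, simplicity alone identifies the complexification as $\mathfrak{g}_2^{\bbC}$, leaving only the choice of real form. The compact form has negative-definite Killing form, so evaluating the Killing form of $\Span_\bbR(X_1,\dots,X_{14})$ and finding it indefinite --- in fact of signature $(8,6)$, whose character $8-6=2$ equals the rank --- singles out the split real form.

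The main obstacle is the dimension count: one must be certain that the overdetermined linear system admits no solution beyond the fourteen produced, since it is precisely this maximality that certifies flatness. Once $\dim=14$ is secured, the remaining identification is routine linear algebra on the $14\times14$ table of structure constants.
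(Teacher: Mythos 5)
Your proposal is correct and takes essentially the same route as the paper: the authors' proof consists precisely in solving the infinitesimal symmetry equations (\ref{sychi}) directly (by computer algebra, as in the preceding propositions) for $\omega^0=\der z-a\der x-b\der y$ and the quartic $\ten$ of (\ref{cong24})--(\ref{cong22}), finding a $14$-dimensional solution space with the listed basis $(X_1,\dots,X_{14})$. Your supplementary identification of the isomorphism type --- maximality of $\dim=14$ forcing flatness, and the Killing-form signature $(8,6)$ singling out the split real form --- is exactly the structural reasoning the paper invokes in the surrounding text and carried out explicitly in its $\sla(4,\bbR)$ case.
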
 
\begin{proof}
As before we solved the infinitesimal symmetry equations (\ref{sychi}) for $X$ with 
$\omega^0=\der z - a \der x-b \der y$ and $\ten$ as in  (\ref{cong24}) and (\ref{cong22}). We found a 14-dimensional space of solutions with the basis $(X_1, X_2,\dots, X_{14})$ as in the statement of the proposition.
\end{proof}
\subsection{Rolling balls as a joystick for $G_2$ aerobatics}
In this section we will use a double fibration
\begin{align}\label{df}
   \xymatrix{
        &\mathrm{G}_2/P_{1,2}  \ar[dl]_{\pi_2} \ar[dr]^{\pi_1} & \\
                   \mathrm{G}_2/P_2 &            & \mathrm{G}_2/P_1 }.
\end{align}
to show how a pilot can achieve the $G_2$ aerobatics man{\oe}uvres using a device - a joystick - consisting of two balls of radii ratio $3:1$ rolling on each other without slipping or twisting.  

In \eqref{df} $P_1$ and $P_2$ are respective two nonisomorphic 9-dimensional parabolic subgroups of the split real form of the exceptional group $G_2$, and $P_{1,2}=P_1\cap P_2$ is the 8-dimensional Borel subgroup of $G_2$. The homogeneous space $G_2/P_2$ has a natural $G_2$ invariant contact structure, and the space $G_2/P_1$ has a natural $G_2$ invariant $(2,3,5)$ distribution structure \cite{Br2}.

To describe these two structures, following \cite{katja}, we introduce two sets of coordinates on the 6-dimensional correspondence space $G_2/P_{1,2}$ adapted to the fibration \eqref{df}.

We start with the basis $(\theta^A)$ of the Maurer-Cartan forms on $G_2$ as in \cite{pawel}. They satisfy the following EDS:
 \begin{equation}\begin{aligned}\label{MaurerCartan}
&\der\theta^0=-6\theta^0\dz\theta^5+\theta^1\dz\theta^4-3\theta^2\dz\theta^3\\
&\\
&\der\theta^1=6\theta^0\dz\theta^9-3\theta^1\dz\theta^5-3\theta^1\dz\theta^8+3\theta^2\dz\theta^7\\
&\der\theta^2=2\theta^0\dz\theta^{10}+\theta^1\dz\theta^6-3\theta^2\dz\theta^5-\theta^2\dz\theta^8+2\theta^3\dz\theta^7\\
&\der\theta^3=2\theta^0\dz\theta^{11}+2\theta^2\dz\theta^6-3\theta^3\dz\theta^5+\theta^3\dz\theta^8+\theta^4\dz\theta^7\\
&\der\theta^4=6\theta^0\dz\theta^{12}+3\theta^3\dz\theta^6-3\theta^4\dz\theta^5+3\theta^4\dz\theta^8\\
&\\
&\der\theta^5=2\theta^0\dz\theta^{13}-\theta^1\dz\theta^{12}+\theta^2\dz\theta^{11}-\theta^3\dz\theta^{10}+\theta^4\dz\theta^9\\
&\der\theta^6=6\theta^2\dz\theta^{12}-4\theta^3\dz\theta^{11}+2\theta^4\dz\theta^{10}+2\theta^6\dz\theta^8\\
&\der\theta^7=-2\theta^1\dz\theta^{11}+4\theta^2\dz\theta^{10}-6\theta^3\dz\theta^9-2\theta^7\dz\theta^8\\
&\der\theta^8=-3\theta^1\dz\theta^{12}+\theta^2\dz\theta^{11}+\theta^3\dz\theta^{10}-3\theta^4\dz\theta^9-\theta^6\dz\theta^7\\
&\\
&\der\theta^9=-\theta^1\dz\theta^{13}-3\theta^5\dz\theta^9-\theta^7\dz\theta^{10}+3\theta^8\dz\theta^9\\
&\der\theta^{10}=-3\theta^2\dz\theta^{13}-3\theta^5\dz\theta^{10}-3\theta^6\dz\theta^9-2\theta^7\dz\theta^{11}+\theta^8\dz\theta^{10}\\
&\der\theta^{11}=-3\theta^3\dz\theta^{13}-3\theta^5\dz\theta^{11}-2\theta^6\dz\theta^{10}-3\theta^7\dz\theta^{12}-\theta^8\dz\theta^{11}\\
&\der\theta^{12}=-\theta^4\dz\theta^{13}-3\theta^5\dz\theta^{12}-\theta^6\dz\theta^{11}-3\theta^8\dz\theta^{12}\\
&\\
&\der\theta^{13}=-6\theta^5\dz\theta^{13}-6\theta^9\dz\theta^{12}+2\theta^{10}\dz\theta^{11}.
 \end{aligned}\end{equation}
 From this one sees that the 6-dimensional distribution
 $$D_6=\ker(\theta^5,\theta^6,\theta^8,\theta^9,\theta^{10},\theta^{11},\theta^{12},\theta^{13})$$
 on $G_2$ is integrable. Thus, $G_2$ is foliated by a $6$-dimensional manifolds. Let us concentrate on one leaf $M_6$ of this foliation. We have inclusion
 $$\iota: M_6\hookrightarrow G_2,$$
 and denoting by $\omega^A=\iota^*\theta^A$, we have
 $$\omega^5=\omega^6=\omega^8=\omega^9=\omega^{10}=\omega^{11}=\omega^{12}=\omega^{13}=0.$$
 Thus on the leaf $M_6$, which we now identify with $M_6=G_2/P_{1,2}$, we have the following EDS satisfied by a coframe $(\omega^0,\omega^1,\omega^2,\omega^3,\omega^4,\omega^7)$ on $M_6$:
 \be\begin{aligned}
 &\der\omega^0=\omega^1\dz\omega^4-3\omega^2\dz\omega^3,\\&\der\omega^1=3\omega^2\wedge\omega^7,\\&\der\omega^2=2\omega^3\wedge\omega^7,\\& \der\omega^3=\omega^4\wedge\omega^7,\\&\der\omega^4=0,\\& \der\omega^7=0.\end{aligned}\label{syka}
\ee
Integrating this system, starting from $\omega^7=-\der x^5$, and $\omega^4=\der x^4$, gives first $\der(\theta^3-x^5\der x^4)=0$, i.e. $\theta^3=\der x^3+x^5\der x^4$, then $\der(\theta^2-2x^5\der x^3-(x^5)^2\der x^4)=0$, i.e. $\theta^2=\der x^2+2x^5\der x^3+(x^5)^2\der x^4$, and by continuing, finally:
 \begin{equation}\label{omegas}
\begin{aligned}
& \omega^0=\der x^0+x^1\der x^4- 3 x^2\der x^3\\
 &\omega^1=\der x^1+3 x^5\der x^2+ 3 (x^5)^2 \der x^3+(x^5)^3\der x^4\\
 &  \omega^2=\der x^2 +2 {x^5} \der x^3 +(x^5)^2 \der x^4\\
  &  \omega^3=\der x^3+ x^5 \der x^4\\
  &  \omega^4=\der x^4,\\
  &\omega^7=-\der x^5.
 \end{aligned}
 \end{equation}
 This gives a coordinate system $(x^0,x^1,x^2,x^3,x^4,x^5)$ on $M_6=G_2/P_{1,2}$, respecting the fibration
\begin{align}\label{df2}
   \xymatrix{
        &\mathrm{G}_2/P_{1,2}  \ar[dl]_{\pi_2}^{e_7=-\partial_{x^5}}& \\
                   \mathrm{G}_2/P_2 &            & .}
\end{align}
In theses coordinates the projection $\pi_2$ forgets about the last component of $(x^0,x^1,x^2,x^3,x^4,x^5)$, i.e. $$\pi_2 : (x^0,x^1,x^2,x^3,x^4,x^5)\mapsto (x^0,x^1,x^2,x^3,x^4).$$
Slight change in the order of integration in \eqref{syka}, i.e. a start with $\omega^7=-\der y^4$, $\omega^4=\der y^5$, gives another local coordinate system  $(y^0,y^1,y^2,y^3,y^4,y^5)$ on $M_6=\mathrm{G}_2/P_{1,2}$ in which the basis of 1-forms is
\begin{equation}\label{newomegas}
\begin{aligned}
& \omega^0=\der y^0-y^5\der y^1- 3 y^4 y^5\der y^2-3(y^2+y^5(y^4)^2)\der y^3\\
 &\omega^1=\der y^1+3 y^4\der y^2+ 3 (y^4)^2 \der y^3\\
 &  \omega^2=\der y^2 +2 {y^4} \der y^3 \\
  &  \omega^3=\der y^3- y^5 \der y^4\\
  &  \omega^4=\der y^5,\\
  &\omega^7=-\der y^4.
 \end{aligned}
\end{equation}
This respects the fibration
\begin{align}\label{df1}
   \xymatrix{
        &\mathrm{G}_2/P_{1,2}  \ar[dr]^{\pi_1}_{e_4=\partial_{y^5}} \\
                  & & \mathrm{G}_2/P_1 \,\,,}
\end{align}
in which the projection $\pi_1$ forgets about the last component in $(y^0,y^1,y^2,y^3,y^4,y^5)$, so that we have $$\pi_1 : (y^0,y^1,y^2,y^3,y^4,y^5)\mapsto (y^0,y^1,y^2,y^3,y^4).$$
A change of coordinates from $(x^0,x^1,x^2,x^3,x^4, x^5)$ to $(y^0,y^1,y^2,y^3, y^4, y^5)$ is given by
\be
\begin{aligned}\label{coordchange}
  &y^0=x^0+x^1x^4+3x^5 x^2x^4-(x^5)^3(x^4)^2,\\
  & y^1=x^1+(x^5)^3x^4,\\
  & y^2=x^2-(x^5)^2x^4,\\
  & y^3=x^3+x^5x^4,\\
  & y^4=x^5, \\
  & y^5=x^4.
\end{aligned}
\ee
We can thus coordinate the three manifolds $G_2/P_{1,2}$, $G_2/P_2$ and $G_2/P_1$ as in the following diagram:
\begin{center}
\framebox{\begin{picture}(400,180)
\put(200,150){\makebox(0,0){\framebox{$\begin{array}{c}
G_2/P_{1,2}\\[10pt]
(x^0,x^1,x^2,x^3,x^4,x^5)\\[10pt]
(y^0,y^1,y^2,y^3,y^4,y^5)
\end{array}$}}}
\put(100,20){\makebox(0,0){\framebox{$\begin{array}{c}
G_2/P_2\\[10pt]
(x^0,x^1,x^2,x^3,x^4,x^5)
\end{array}$}}}
\put(300,20){\makebox(0,0){\framebox{$\begin{array}{c}
G_2/P_1\\[10pt]
(y^0,y^1,y^2,y^3,y^4,y^5)
\end{array}$}}}
\put(170,119){\vector(-2,-3){52}}
\put(230,119){\vector(2,-3){52}}
\put(130,75){\rotatebox{56}{$\pi_2$}}
\put(138,60){\rotatebox{56}{$e_7=-\partial_{x^5}$}}
\put(260,83){\rotatebox{-56}{$\pi_1$}}
\put(232,90){\rotatebox{-56}{$e_4=\partial_{y^5}$}}
\end{picture}}
\end{center}

There is a natural contact structure on $G_2/P_2$ given by a contact distribution ${\mathcal D}_4$ spanned by
\be Z_1=\partial_{x^4}-x^1\partial_{x^0},\,\quad\,Z_2=\partial_{x^3}+3x^2\partial_{x^0},\,\quad\,Z_3=\partial_{x^2},\,\quad\,Z_4=\partial_{x^1}.\label{z1234}\ee
This defines a \emph{flat} $G_2$ contact structure on $G_2/P_2$ with the structural tensor $\Upsilon$ on ${\mathcal D}_4$ defined in terms of \eqref{omegas} as in \eqref{cong24}. Note that, although $(\omega^i)$ in \eqref{omegas} depend on the 6th variable $x^5$, the tensor $\Upsilon$ defined by \eqref{cong24} does not depend on it,
$$\Upsilon=3(\der x^2)^2(\der x^3)^2-4\der x^1(\der x^3)^3-4(\der x^2)^3\der x^4+6\der x^1\der x^2\der x^3\der x^4-(\der x^1)^2(\der x^4)^2.$$
Let us now introduce a basis $(e_0,e_1,e_2,e_3,e_4,e_7)$ of vector fields on $G_2/P_{1,2}$ dual to the 1-forms $(\omega^0,\omega^1,\omega^2,$ $\omega^3,\omega^4,\omega^7)$. Then the fibers of the fibration $G_2/P_{1,2}\to G_2/P_2$ are tangent to $e_7$ and fibers of the fibration $G_2/P_{1,2}\to G_2/P_1$ are tangent to $e_4$.
Looking at the system \eqref{syka} we find that the only nonzero commutator of vector fields $(e_3,e_7)$ with the fiber generator $e_4$ of  $G_2/P_{1,2}\to G_2/P_1$ is $[e_4,e_7]=-e_3$. Thus, rank 2 distribution $\tilde{\mathcal D}{}_2$ spanned on $G_2/P_{1,2}$ by vector fields $e_3$ and $e_7$ \emph{descends} to a well defined rank 2-distribution
$${\mathcal D}_2=\Span(\pi_{1*}e_7,\pi_{1*}e_3)$$ on $G_2/P_1$. Since on $G_2/P_{1,2}$ we have
$$[e_7,e_3]=2e_2, \quad [e_7,e_2]=-3 e_1,\quad [e_3,e_2]=3 e_0,$$
and
$$[e_4,e_2]=0, \quad [e_4,e_0]=0,\quad [e_4,e_1]=-e_0,$$
then the filtration 
$$\Span(e_7,e_3)\subset\Span(e_7,e_3,e_2)\subset\Span(e_7,e_3,e_2,e_1,e_0)$$
of distributions of respective rank 2,3,5 on $G_2/P_{1,2}$ \emph{descends} to a $G_2$ invariant filtration 
$${\mathcal D}_2\subset[{\mathcal D}_2,{\mathcal D}_2]\subset[{\mathcal D}_2,[{\mathcal D}_2,{\mathcal D}_2]]={\mathrm T}(G_2/P_1)$$
of distributions of respective rank 2,3,5 on $G_2/P_1$.

Thus we see that on $G_2/P_2$ we have a $G_2$ invariant $G_2$ \emph{contact structure} $({\mathcal D}_4,\Upsilon)$ and on $G_2/P_1$ we have a $G_2$ invariant $(2,3,5)$ distribution structure $({\mathcal D}_2,[{\mathcal D}_2,{\mathcal D}_2],[{\mathcal D}_2,[{\mathcal D}_2,{\mathcal D}_2]])$.

The explicit forms of the vector fields $e_3$ and $e_7$ on $G_2/P_{1,2}$ in coordinates $(y^0,y^1,y^2,y^3,y^4,y^5)$ are
$$e_3=\partial_{y^3}-2y^4\partial_{y^2}+3(y^4)^2\partial_{y^1}+3y^2\partial_{y^0},\quad e_7=-\partial_{y^4}-y^5e_3.$$

This, in particular, enables us to interpret $G_2/P_{1,2}$ as the bundle over $G_2/P_1$ of \emph{directions} in the 2-distribution ${\mathcal D}_2$: the fiber coordinate $y^5$ in the fibration $G_2/P_{1,2}\to G_2,P_1$ corresponds to a choice of a direction $Y(y^5)={\mathrm dir}\Big(-\partial_{y^4}-y^5 (\pi_{1*}e_3)\Big)$ in the distribution ${\mathcal D}_2=\Span(\partial_{y^4},\pi_{1*}e_3)$.

This enables us to \emph{lift any curve} $\gamma(t)$  \emph{tangent to the distribution} ${\mathcal D}_2$ from $G_2/P_1$ to $G_2/P_{1,2}$. Indeed, a curve
\be \gamma_y(t)=\Big(y^0(t),y^1(t),y^2(t),y^3(t),y^4(t)\Big),\label{gat}\ee
as tangent to ${\mathcal D}_2$ at every instant of time $t$, defines a \emph{direction}
$${\mathrm dir}(\dot{\gamma}_y(t))=\Big(\dot{y}{}^0(t),\dot{y}{}^1(t),\dot{y}{}^2(t),\dot{y}{}^3(t),\dot{y}{}^4(t)\Big)$$
in the distribution ${\mathcal D}_2$, i.e. a \emph{point} in the fiber in $G_2/P_{1,2}$ over the point $\gamma_y(t)\in G_2/P_1$. Changing $t$ along $\gamma_y(t)$ we collect these points in $G_2/P_{1,2}$ obtaining a curve $\tilde{\gamma}(t)\subset G_2/P_{1,2}$ above $\gamma_y(t)$. This is the lift of $\gamma_y(t)$. Explicitly, any curve $\gamma_y(t)=\Big(y^0(t),y^1(t),y^2(t),y^3(t),y^4(t)\Big)\subset G_2/P_1$ has a tangent vector
$$
\begin{aligned}
  \dot{\gamma}_y(t)=&\dot{y}{}^4\partial_{y^4}+\dot{y}{}^3\partial_{y^3}+\dot{y}{}^2\partial_{y^2}+\dot{y}{}^1\partial_{y^1}+\dot{y}{}^0\partial_{y^0}\\
=&\dot{y}{}^4\partial_{y^4}+\dot{y}{}^3\pi_{1*}e_3+(\dot{y}{}^2+2y^4\dot{y}{}^3)\partial_{y^2}+(\dot{y}{}^1-3(y^4)^2\dot{y}{}^3)\partial_{y^1}+(\dot{y}{}^0-3y^2\dot{y}{}^3)\partial_{y^0},
\end{aligned}
$$
and for this to be tangent to ${\mathcal D}_2$ we need to have:
\be \dot{y}{}^0=3y^2\dot{y}{}^3\quad\&\quad \dot{y}{}^1=3(y^4)^2\dot{y}{}^3\quad\&\quad \dot{y}{}^2=-2y^4\dot{y}{}^3.\label{relan}\ee
If these tangency to ${\mathcal D}_2$ relations are satisfied, a tangent vector to $\gamma_y(t)$ at $t$ is
$\dot{\gamma}{}_y(t)=\dot{y}{}^4\partial_{y^4}+\dot{y}{}^3\pi_{1*}e_3$.  This  defines a direction
$${\mathrm dir}(\dot{\gamma}{}_y)(t)=\partial_{y^4}+\Big(\frac{\dot{y}{}^3}{\dot{y}{}^4}\Big)\,(\pi_{1*}e_3)$$
in ${\mathcal D}_2$, and in turn the lift
$$\begin{aligned}
  \tilde{\gamma}(t)=&\Big(y^0(t),y^1(t),y^2(t),y^3(t),y^4(t),y^5(t)\Big)\\
    =&\Big(y^0(t),y^1(t),y^2(t),y^3(t),y^4(t),\frac{\dot{y}{}^3(t)}{\dot{y}{}^4(t)}\Big)\end{aligned}$$
of the curve $\gamma_y(t)$ from $G_2/P_1$ to $G_2/P_{1,2}$.
It is now convenient to rewrite this curve on $G_2/P_{1,2}$ in coordinates $(x^0,x^1,x^2,x^3,x^4,x^5)$ adapted to the fibration $G_2/P_2$. According to \eqref{coordchange} we have
$$\tilde{\gamma}(t)=\Big(x^0(t),x^1(t),x^2(t),x^3(t),x^4(t),x^5(t)\Big)$$ with 
\be\begin{aligned}
  x^0(t)=&\big(y^0-y^1y^5-2y^2y^4y^5-(y^4)^3(y^5)^2\big)(t),\\
  x^1(t)=&\big(y^1-(y^4)^3 y^5\big)(t)\\
  x^2(t)=&\big(y^2+(y^4)^2 y^5\big)(t)\\
  x^3(t)=&\big(y^3-y^4 y^5\big)(t)\\
  x^4(t)=&y^5(t)\\
  x^5(t)=&y^4(t)\\
\end{aligned}
\label{xxx}\ee
and
$$y^5(t)=\frac{\dot{y}{}^3(t)}{\dot{y}{}^4(t)}.$$
Projecting this by using $\pi_2$ gives a curve
$$\begin{aligned}
  \gamma_x(t)&=\pi_2(\tilde{\gamma}(t))=\Big(x^0,x^1,x^2,x^3,x^4,x^5\Big)(t)\\
  &=\Big(y^0-y^1\frac{\dot{y}{}^3}{\dot{y}{}^4}-2y^2y^4\frac{\dot{y}{}^3}{\dot{y}{}^4}-(y^4)^3(\frac{\dot{y}{}^3}{\dot{y}{}^4})^2,y^1-(y^4)^3 \frac{\dot{y}{}^3}{\dot{y}{}^4},y^2+(y^4)^2 \frac{\dot{y}{}^3}{\dot{y}{}^4},y^3-y^4 \frac{\dot{y}{}^3}{\dot{y}{}^4}\Big)(t).\end{aligned}$$

We thus uniquely associated a curve $\gamma_x(t)$ in $G_2/P_2$ to any curve $\gamma_y(t)$ which in $G_2/P_1$ is tangent to the $(2,3,5)$ distribution ${\mathcal D}_2$. This curve is \emph{very special} in $G_2/P_2$. Actually it is both \emph{tangent to the contact distribution} ${\mathcal D}_4$ in $G_2/P_2$ as well it is \emph{simultaneously null with respect to all three bilinear forms} $g^1$, $g^2$, and $g^3$ defining the $G_2$ contact structure on $G_2/P_2$. To see this we find a tangent vector to $\gamma_x(t)$ at every moment of time $t$. Differentiating $\gamma_x(t)$ with respect to time, and using relations \eqref{relan} and \eqref{xxx},  we get
$$\begin{aligned}
  \dot{\gamma}_x(t)=&\dot{x}{}^4\partial_{x^4}+\dot{x}{}^3\partial_{x^3}+\dot{x}{}^2\partial_{x^2}+\dot{x}{}^1\partial_{x^1}+\dot{x}{}^0\partial_{x^0}\\
  =&\frac{\der}{\der t}\Big(\frac{\dot{y}{}^3(t)}{\dot{y}{}^4(t)}\Big)\Big(-\big(y^1+3y^2y^4+2(y^4)^3\frac{\dot{y}{}^3(t)}{\dot{y}{}^4(t)}\big)\partial_{x^0}-(y^4)^3\partial_{x^1}+(y^4)^2\partial_{x^2}-y^4\partial_{x^3}+\partial_{x^4}\Big)\\
  =&\frac{\der}{\der t}\Big(\frac{\dot{y}{}^3(t)}{\dot{y}{}^4(t)}\Big)~\Big(Z_1-y^4 Z_2+(y^4)^2 Z_3-(y^4)^3Z_4\Big),
\end{aligned}
$$
where $Z_1$, $Z_2$, $Z_3$ and $Z_4$ are (restricted to the curve $\gamma_x(t)$) vector fields spanning the contact distribution ${\mathcal D}_4$, as in \eqref{z1234}. Thus, the curve $\gamma_x(t)$ is not only tangent to the contact distribution ${\mathcal D}_4$ in $G_2/P_2$, but also its tangent vectors always lie on a twisted cubic
$$Z_1+T(t)Z_2+T^2(t)Z_3+T^3(t)Z_4$$
parametrized in ${\mathcal D}_4$ by $T(t)=-y^4(t)$.    


In this way, taking any curve $\gamma_y(t)$ in $G_2/P_1$ tangent to the $(2,3,5)$ distribution ${\mathcal D}_2$, then lifting it to a curve $\tilde{\gamma}(t)$ in $G_2/P_{1,2}$, and finally projecting this one to $G_2/P_2$, we get a curve $\gamma_x(t)$ in $G_2/P_2$ with velocity vector on the tangent variety to the twisted cubic, thus performing an \emph{advanced} $G_2$ man{\oe}uvre.
\section{Outlook}
\subsection{Achievability}
A flying saucer performing an attacking, landing or $G_2$ man{\oe}uvre, is a control system. It moves in the configuration space $C_>$, with coordinate system $(x,y,z,a,b)$ in a way such that its velocity $\dot{\gamma}$ is
\begin{itemize}
\item[(A)] $\dot{\gamma}=u_1\big(3u_3Z_1+Z_3\big)+u_2\big(u_3Z_2+Z_4\big)$ when in attacking mode,
\item[(L)] $\dot{\gamma}=u_3\big((1+b^2)u_2+3ab u_1\big)Z_1-u_3(\big(abu_2+3(1+a^2)u_1\big)Z_2+u_1Z_3+u_2Z_4$ when in landing mode,
\item[($G_2$)]
  \begin{itemize}
  \item[(s)] $\dot{\gamma}=u_1Z_1+u_1(u_2+u_3)Z_2+u_1(u_2^2+2u_3u_2)Z_3+u_1(u_2^3+3u_3u_2^2)Z_4$ when in a simpler $G_2$ mode, and
    \item[(d)] $\dot{\gamma}=u_1Z_1+u_1u_2Z_2+u_1u_2^2Z_3+u_1u_2^3Z_4$ when in a more stringent $G_2$ mode.
    \end{itemize}
\end{itemize}
Here the functions $u_1,u_2,u_3$ are arbitrary functions of time. They represents \emph{controls} of the system. Vector fields $Z_1,Z_2,Z_3,Z_4$ are given by (\ref{cong23}). They span the contact distribution.

A natural question in control theory is if, starting from a given configuration, one can move the system using available controls to any other position in configuration space. In flying saucer context one can ask if, only using one of the man{\oe}uvres (A), (L), ($G_2$)(s) or ($G_2$)(d) one can drive a saucer from any point in the configuration space starting to any other one.

To answer this we invoke the Nagano-Sussman theorem.
\begin{theorem}
  Let $\mathcal F=\{Y_i\}$ be a family of vector fields $Y_i$ on a manifold $M$. Suppose that a finite number of brackets of $Y_i$s and finite number of iterations of these brackets generate ${\rm T}_qM$ at every $q\in M$ (we say that $\mathcal F$ is \emph{bracket generating}). Then the orbit of this family of vector fields at each point is $M$.     
\end{theorem}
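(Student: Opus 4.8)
The plan is to derive the stated reachability conclusion from the structural \emph{Orbit Theorem}, whose output is that each orbit of the family $\mathcal{F}$ is an immersed submanifold of $M$ with a computable tangent space. Recall that the orbit of $\mathcal{F}$ through a point $q$ is the set
$$\mathcal{O}_q=\{\,(\Phi^{Y_{i_k}}_{t_k}\o\cdots\o\Phi^{Y_{i_1}}_{t_1})(q)\ :\ k\in\bbN,\ t_j\in\bbR,\ Y_{i_j}\in\mathcal{F}\,\}$$
of all points reachable from $q$ by following the forward \emph{and} backward flows $\Phi^{Y_i}_t$ of the fields in $\mathcal{F}$ for arbitrary times. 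Since ``$p$ lies in the orbit of $q$'' is an equivalence relation, the orbits partition $M$. The whole strategy is then to show that, under the bracket-generating hypothesis, every orbit is \emph{open}; as open sets partitioning a connected manifold, a single orbit must then exhaust $M$.

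The engine of the argument is the orbit distribution $\mathcal{P}$, defined at each point $p$ as the span of all vectors $(\Phi_*)Y(p)$, where $Y$ ranges over $\mathcal{F}$ and $\Phi$ ranges over all finite compositions of flows of members of $\mathcal{F}$. By construction $\mathcal{P}$ contains every $Y_i$ and is invariant under the flows of $\mathcal{F}$. The Orbit Theorem asserts that $\mathcal{P}$ has locally constant rank along each orbit, that $T_p\mathcal{O}_q=\mathcal{P}_p$ for every $p\in\mathcal{O}_q$, and that $\mathcal{O}_q$ is a connected immersed submanifold integrating $\mathcal{P}$. Granting this, I would first observe that flow-invariance forces $\mathcal{P}$ to be closed under Lie brackets with elements of $\mathcal{F}$: if $Z\in\mathcal{P}$ and $Y\in\mathcal{F}$, then $(\Phi^Y_t)_*Z\in\mathcal{P}$ for all $t$, and differentiating at $t=0$ yields $\mathcal{L}_Y Z=[Y,Z]\in\mathcal{P}$. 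Iterating, $\mathcal{P}_p$ contains the evaluation at $p$ of the entire Lie algebra generated by $\mathcal{F}$. The bracket-generating hypothesis says precisely that this evaluation is all of $T_pM$, so $\mathcal{P}_p=T_pM$ and hence $T_p\mathcal{O}_q=T_pM$. A full-rank immersed submanifold is a local diffeomorphism onto its image and hence open, so each orbit is open, and the topological argument finishes the proof: assuming $M$ connected, the orbit through any point is simultaneously open and closed (its complement being a union of the other open orbits), hence equals $M$.

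The main obstacle is the Orbit Theorem itself, i.e.\ establishing that $\mathcal{O}_q$ carries a manifold structure with $T_p\mathcal{O}_q=\mathcal{P}_p$; everything else is soft formal manipulation and point-set topology. The heart of that step is a local chart built from the composed flow map
$$(t_1,\dots,t_m)\ \longmapsto\ (\Phi^{Y_{i_m}}_{t_m}\o\cdots\o\Phi^{Y_{i_1}}_{t_1})(p),$$
for which one selects a maximal collection of flows whose time-derivatives are linearly independent at $p$, and then shows---using that pushing a field forward by a flow keeps it inside $\mathcal{P}$---that the image of this map is an integral leaf of $\mathcal{P}$ and that appending any further flow cannot increase the reachable dimension. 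This simultaneously yields the constant rank of $\mathcal{P}$ and identifies the tangent space, and it is the only place where genuine analytic work, rather than formal bracket calculus, is required.
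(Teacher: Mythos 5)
The paper does not actually prove this statement: it is quoted as the classical Nagano--Sussmann theorem of geometric control theory and immediately applied to the saucer's control systems, so there is no in-paper argument to compare yours against. Your proposal is the standard textbook proof of that theorem, routed through Sussmann's Orbit Theorem, and it is correct in outline: orbits partition $M$; the orbit distribution $\mathcal{P}$ is by construction invariant under the flows of $\mathcal{F}$; differentiating $t\mapsto\bigl((\Phi^{Y}_{t})_{*}Z\bigr)(p)$, a curve lying in the fixed finite-dimensional (hence closed) subspace $\mathcal{P}_p$, shows $\mathcal{P}$ is closed under brackets with members of $\mathcal{F}$ (the sign ambiguity between $\pm[Y,Z]$ is immaterial, since membership in a linear subspace is sign-blind); so the bracket-generating hypothesis gives $\mathcal{P}_p=T_pM$, the Orbit Theorem makes each orbit an open immersed submanifold, and an open member of an open partition of a connected space is the whole space. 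Two caveats, both of which you handle honestly. First, connectedness of $M$ is not stated in the theorem but is indispensable --- the statement as printed is false for disconnected $M$ --- and you insert it explicitly; this is harmless for the paper, whose applications all live on ${\mathcal C}_>\cong\bbR^5$. Second, essentially all of the analytic content is delegated to the Orbit Theorem, which you cite with only a sketch of the composed-flow chart construction; that is a reasonable division of labour given that the paper treats the entire theorem as a known black box, but you should be clear that what you have written is a reduction of Nagano--Sussmann to Sussmann's Orbit Theorem rather than a self-contained proof. (A genuinely different and somewhat more elementary route, closer in spirit to the Chow--Rashevskii theorem the paper invokes earlier, would establish openness of the reachable set directly by exhibiting full rank of a composed flow map, using commutators of flows to approximate iterated brackets, thereby bypassing the orbit machinery altogether.)
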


We now apply this theorem to the flying saucer in the \emph{attacking mode}. We can chose particular controls to define a family of vector fields that satisfy the assumption of the Nagano-Sussman theorem. Indeed,
\begin{itemize}
\item taking $u_1=1,u_2=u_3=0$ defines a vector field $Y_1=Z_3$,
\item taking $u_2=1, u_1=u_3=0$ defines $Y_2=Z_4$,
\item taking $u_1=u_3=1, u_2=0$ defines $Y_3=3Z_1+Z_3$, and
\item taking $u_1=0, u_2=u_3=1$ defines $Y_4=Z_2+Z_4$. \end{itemize}
All four vector fields $Y_1,Y_2,Y_3,Y_4$ are tangent to the contact distribution ${\mathcal D}$ and null with respect to the metric, which on $\mathcal D$ defines the attacking man{\oe}uvre. Since $[Y_2,Y_3]=3\partial_z$ and $Y_1\dz Y_2\dz Y_3\dz Y_4\dz\partial_z\neq 0$ at each point, then the family $F=\{Y_1,Y_2,Y_3,Y_4\}$ of vector fields on ${\mathcal C}_>$ is bracket generating at each point of ${\mathcal C}_>$. Thus, according to Nagano-Sussman theorem we can achieve any point of ${\mathcal C}_>$ starting from any other point by going along integral curves of vector fields from family $\mathcal F$.

For the achievability in the landing man{\oe}uvre we take
\begin{itemize}
\item $Y_1=Z_3$ by puting  $u_1=1,u_2=u_3=0$,
\item $Y_2=Z_4$ by puting $u_2=1, u_1=u_3=0$,
\item $Y_3=Z_3-3(1+a^2)Z_2+3ab Z_1$ by puting $u_1=u_3=1, u_2=0$,
\item $Y_4=Z_4-ab Z_2+(1+b^2)Z_1$ by puting $u_2=u_3=1, u_1=0$.
\end{itemize}
Here we see that $[Y_1,[Y_2,[Y_2,Y_3]]]=9\partial_z$, and we check that $Y_1\dz Y_2\dz Y_3\dz Y_4\dz\partial_z\neq 0$ at each point. Thus, again we found a family of vector fields ${\mathcal F}=\{Y_1,Y_2,Y_3,Y_4\}$ tangent to $\mathcal D$ and null with respect to the metric defining the landing man{\oe}uvre, which is bracket generating at each point of ${\mathcal C}_>$. By Nagano-Sussman theorem going along the integral curves of vector fields from this family enables as to achieve any point of ${\mathcal C}_>$ from any other point.

Finally for the achivability in the $G_2$ mode man{\oe}uvres, it is enough to show that the more stringent man{\oe}uvres ($G_2$)(d) can be used to go everywhere from anywhere. Taking $u_1=1$, and $u_2$ in values $\{0,1,-1,2\}$ we obtain $Y_1=Z_1$, $Y_2=Z_1+Z_2+Z_3+Z_4$, $Y_3=Z_1-Z_2+Z_3-Z_4$, and $Y_4=Z_1+2Z_2+4Z_3+8Z_4$. We have $[Y_2,Y_1]=\partial_z$ and  $Y_1\dz Y_2\dz Y_3\dz Y_4\dz\partial_z\neq 0$ at each point. Thus we see that we can achieve any point of the configuration space from any other point by a successive ($G_2$)(d) man{\oe}uvres.

Summarizing we have:
\begin{proposition}
A flaying saucer performing a sequence of (A) man{\oe}uvres, or a sequence of (L) man{\oe}uvres, or a sequence of ($G_2$) man{\oe}uvres can reach any point of its configuration space starting from any other point.   
  \end{proposition}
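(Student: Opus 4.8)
The plan is to deduce the statement directly from the Nagano--Sussman theorem quoted above, applied once to each of the three man{\oe}uvre families. It suffices, in each mode, to extract from the admissible velocities a finite collection of vector fields that is \emph{bracket generating} at every point of ${\mathcal C}_>$; the theorem then guarantees that the orbit through any configuration is all of ${\mathcal C}_>$, so that any point is reachable from any other. The natural choice of finite collection is obtained by freezing the control functions $u_1,u_2,u_3$ at finitely many constant values, which is exactly what the preceding paragraphs set up.

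First I would record the common geometric feature of all three cases. Each selected field $Y_1,\dots,Y_4$ is tangent to the rank-four contact distribution ${\mathcal D}=\Span(Z_1,Z_2,Z_3,Z_4)$ of (\ref{cong23}). Since $\mathcal D$ is contact, $\der\omega^0\dz\der\omega^0\dz\omega^0\neq 0$ everywhere, no four fields lying in $\mathcal D$ can span ${\mathrm T}_q{\mathcal C}_>$ on their own; the missing fifth direction is the Reeb direction $\partial_z$, which is transverse to $\mathcal D$ because $\omega^0(\partial_z)=1$. The heart of each verification is therefore to produce a single iterated Lie bracket of the $Y_i$ that is a nonzero multiple of $\partial_z$, and then to check that $Y_1\dz Y_2\dz Y_3\dz Y_4\dz\partial_z\neq 0$ pointwise, so that $(Y_1,Y_2,Y_3,Y_4,\partial_z)$ is a frame.

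I would then run the three cases along the lines already prepared. In the attacking mode (A) the transverse direction is reached at first order, $[Y_2,Y_3]=3\partial_z$; in the landing mode (L) it appears only after a triple bracket, $[Y_1,[Y_2,[Y_2,Y_3]]]=9\partial_z$, reflecting the more strongly twisted landing cone; and for the $G_2$ modes it is enough to treat the stringent case $(G_2)(\mathrm d)$, since every $(\mathrm d)$-velocity is also an $(\mathrm s)$-velocity, where freezing $u_1=1$ and sampling $u_2\in\{0,1,-1,2\}$ gives $[Y_2,Y_1]=\partial_z$. In every case the bracket is nonzero and transverse, the five-fold wedge with $\partial_z$ is nonvanishing, and so the family is bracket generating; Nagano--Sussman then yields achievability.

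The step I expect to be the main obstacle is the $G_2$ case, because there the admissible velocities do not form a linear subspace of $\mathcal D$ but sweep out the nonlinear tangent variety of a twisted cubic. One must verify that a finite set of discrete control values already produces four pointwise-independent directions together with the transverse bracket. For mode $(\mathrm d)$ this reduces to the nonvanishing of a Vandermonde determinant in the sampled values of $u_2$, which is precisely why the four distinct values $\{0,1,-1,2\}$ suffice; once this linear independence is in hand, the argument closes exactly as in the attacking and landing cases.
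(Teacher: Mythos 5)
Your proposal is correct and follows essentially the same route as the paper: you apply the Nagano--Sussman theorem to the same constant-control fields ($[Y_2,Y_3]=3\partial_z$ in mode (A), $[Y_1,[Y_2,[Y_2,Y_3]]]=9\partial_z$ in mode (L), and $u_1=1$, $u_2\in\{0,1,-1,2\}$ with $[Y_2,Y_1]=\partial_z$ in mode ($G_2$)(d), which suffices since (d)-velocities are (s)-velocities), together with the pointwise nonvanishing of $Y_1\dz Y_2\dz Y_3\dz Y_4\dz\partial_z$. Your Vandermonde observation is a nice explicit justification of the linear independence that the paper merely asserts, but it does not change the argument.
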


\subsection{Arbitrarines in the choice of twisted cubic in the $G_2$ mode man{\oe}uvre}
There is a frustrating difference in our definitions of attacking and landing modes of a saucer, and the definition of the $G_2$ modes. The former definitions were expressed in a `human accessible terms', such as `keep your velocity parallel/orthogonal to the axis of rotation', whereas the latter needed a rather arbitrary placement of the twisted cubic on the contact distribution. Not only such terms as `contact distribution' or `twisted cubic' are \emph{not} `human accessible', but also we had a \emph{huge arbitrariness} in choosing a twisted cubic on the contact distribution $\mathcal D$. Actually, whenever we chose a twisted cubic in a way such that its corresponding module $\Span[g^1,g^2,g^3]$ (or its structural tensor $\ten$) is \emph{compatible}\footnote{in the sense that the common stabilizer of $\ten$ and $\der\omega^0$ is irreducible $\glg(2,\bbR)$} with the contact form $\der\omega^0$, we always define a $G_2$ contact structure on saucer's configuration space. However, in general this structure will not be \emph{flat}, i.e. it will not have $G_2$ symmetry. Nevertheless there are possibilities of choosing a twisted cubic on $\mathcal D$, apparently different from the one we have chosen in Section \ref{g2section}, which also defines a $G_2$ contact structure with $G_2$ symmetry on saucer's configuration space. Here is an example:

Take $\omega^0=\der z-a \der x-b\der y$ as always, and
$$\omega^1=\frac{y\der x-x\der y}{y},\quad\omega^2=\frac{\der x}{x},\quad\omega^3=-\tfrac13y(x\der a+y\der b),\quad\omega^4=-\frac{y^2\der b}{x}.$$
We claim that via (\ref{cong24}) this defines a structural tensor $\ten$ which is compatible with $\der\omega^0$, i.e. the stabilizer of $\ten$ and $\der\omega^0$ in $\glg({\mathcal D})$ is the irreducible $\glg(2,\bbR)$. Moreover, one can check that the $G_2$ contact structure defined by $\mathcal D$ and $\ten$ on ${\mathcal C}_>$ has 14 infinitesimal symmetries which form $\mathfrak{g}_2$. Thus one can use \emph{this} realization of the flat $G_2$ contact structure to define $G_2$ man{\oe}uvres of the flying saucer.

Something is missing here: we start with a flat $G_2$ contact structure and then, by identifying its contact distribution with the velocity space of the flying saucer we are able to define the man{\oe}uvre. The situation was very different in case of man{\oe}uvres (A) and (L). We defined them knowing nothing about the corresponding contact geometry. The geometry was miraculously defined by the man{\oe}uvre. We need such an approach for the $G_2$ story, but we are unable to find it!

\section{Acknowledgments}
The authors would like to thank Katja Sagerschnig and Travis Willse
for many helpful conversations. 
Special thanks are due to Jan Gutt for the idea of a $G_2$ joystick, 
which was suggested during a beer session with the second author 
at Jabeerwocky, one craft beer pubs in Warsaw.

\end{document}